\newcounter{braid}
\newcounter{strands}
\DeclareMathAlphabet{\bsf}{OT1}{cmss}{bx}{n}
\def\cross{%
  \@ifnextchar^{\message{Got sup}\cross@sup}{\cross@sub}}
\def\cross@sup^#1_#2{\render@cross{#2}{#1}}
\def\cross@sub_#1{\@ifnextchar^{\cross@@sub{#1}}{\render@cross{#1}{1}}}
\def\cross@@sub#1^#2{\render@cross{#1}{#2}}
\def\render@cross#1#2{
  \def\strand{#1}
  \def\crossing{#2}
  \pgfmathsetmacro{\cross@y}{-\value{braid}*\braid@h}
  \pgfmathtruncatemacro{\nextstrand}{#1+1}
  \foreach \thread in {1,...,\value{strands}}
  {
    \pgfmathsetmacro{\strand@x}{\thread * \braid@w}
    \ifnum\thread=\strand
    \pgfmathsetmacro{\over@x}{\strand * \braid@w + .5*(1 - \crossing) * \braid@w}
    \pgfmathsetmacro{\under@x}{\strand * \braid@w + .5*(1 + \crossing) * \braid@w}
    \draw[braid] \pgfkeysvalueof{/tikz/braid start} +(\under@x pt,\cross@y pt) to[out=-90,in=90] +(\over@x pt,\cross@y pt -\braid@h);
    \draw[braid] \pgfkeysvalueof{/tikz/braid start} +(\over@x pt,\cross@y pt) to[out=-90,in=90] +(\under@x pt,\cross@y pt -\braid@h);
    \else
    \ifnum\thread=\nextstrand
    \else
     \draw[braid] \pgfkeysvalueof{/tikz/braid start} ++(\strand@x pt,\cross@y pt) -- ++(0,-\braid@h);
    \fi
   \fi
  }
  \stepcounter{braid}
}
\tikzset{braid/.style={double=\pgfkeysvalueof{/tikz/braid colour},double distance=1pt,line width=2pt,white}}
\newcommand{\braid}[2][]{%
  \begingroup
  \pgfkeys{/tikz/strands=2}
  \tikzset{#1}
  \pgfkeysgetvalue{/tikz/braid width}{\braid@w}
  \pgfkeysgetvalue{/tikz/braid height}{\braid@h}
  \setcounter{braid}{0}
  \let\sigma=\cross
  #2
  \endgroup
}
\newtheorem{theorem}{Theorem}%[section]
\newtheorem{proposition}[theorem]{Proposition}
\newtheorem{lemma}[theorem]{Lemma}
\newtheorem{corollary}[theorem]{Corollary}
\def\Z{\mathbb{Z}}
\def\md{\mathcal{D}}
\def\qed{\hfill$\square$\medskip}
\def\Zpk{\mathbb{Z}/p^{k}}
\def\Zpk1{\mathbb{Z}/p^{k-1}}
\newcommand{\rref}[1]{(\ref{#1})}
\newcommand{\beg}[2]{\begin{equation}\label{#1}#2\end{equation}}
\def\sl2{\widetilde{SL_{2}(\Z)}}
\def\md
\def\rank{\operatorname{rank}}
\title[]{Notes on equivariant homology with constant coefficients}
\author{Sophie Kriz}
\begin{document}
\maketitle

\vspace{5mm}

\begin{abstract}
In this paper, for a finite group, we discuss a method for calculating equivariant homology
with constant coefficients. We apply it to completely calculate the geometric fixed points of the equivariant spectrum
representing equivariant (co)homology with constant coefficients. We also treat a more complicated example of inverting the standard
representation in the equivariant homology of split extraspecial groups at the prime 2.
\end{abstract}

\vspace{5mm}

\section{Introduction}

\vspace{5mm}

Equivariant spectra are the foundation of equivariant generalized homology and cohomology theory of $G$-spaces (and ultimately, $G$-spectra) for a finite (or more generally compact Lie) group $G$,
which has all the formal properties of generalized non-equivariant homology and cohomology theory, including duality, along with stability under suspensions by
finite-dimensional real representations. They were introduced and developed in \cite{LewisMaySteinbergerEquivariant} (see also \cite{AdamsPrerequisites, GreenleesAdamsSpectralSequence}).
Equivariant homology and cohomology $H\underline{A}_G$ with constant coefficients in an abelian group $A$, on the other
hand, can be defined on the chain level, as a part of the theory of Bredon \cite{Bredon}.
Both contexts are reconciled in \cite{LewisMayMcClureOrdinary}, where, more generally, equivariant Eilenberg-MacLane spectra of Mackey functors are defined.
In this paper, we discuss a spectral sequence (Proposition \ref{spectralsequence}, \ref{spectralsequencebased}) which can be used to 
compute generalized equivariant homology of a $G$-CW-complex from its subquotients of constant isotropy.
This spectral sequence is especially efficient in the case of $H\underline{A}_G$.
For example, we shall prove that for a (finite) $p$-group $G$, the spectral sequence computing $H\underline{\Z/p}_*^G (X)$
always collapses to $E^1$ (see Theorem \ref{spectralsequencecollapses} below).

Note that this is false in cohomology.
By \cite{Bredon}, for a $G$-CW-complex $X$ and an abelian group $A$,
\beg{bredonformula}{H^*_G(X; \underline{A}) = H^* (X/G; A).}
Let $G= \Z/2$ and $X= S^{\alpha}$ where $\alpha$ denotes the sign representation of $\Z /2$.
Then $X / (\Z/2) \simeq *$.
Thus, by \rref{bredonformula}, $H_G^n(X; \underline{A})$ is only non-trivial for $n=0$, while $X$ has a $1$-cell of isotropy $\{ e\}$.

One may think, therefore, that equivariant homology with constant coefficients carries less information than cohomology.
It turns out, however, that equivariant $E$-homologies of certain spaces give important information about a spectrum $E$.
For example, the {\em geometric fixed point spectra}
(\cite{GreenleesAdamsSpectralSequence} and \cite{LewisMaySteinbergerEquivariant}, II \textsection 8, 9)
$\Phi^HE$, where $H$ runs through subgroups of $G$, completely characterize the spectrum $E$.
The coefficients $\Phi^G E_*$, for a finite group $G$, are the reduced $E$-homology of the smash product
$S^{\infty V}$ of infinitely many copies of the one-point compactification $S^V$ of the reduced regular representation $V$.
Geometric fixed point spectra proved very useful in applications, for example, in \cite{DieckBordism, HesselholtMadsenKTheory, HillHopkinsRavenelKervaire}.

We will see that our method allows a complete computation of the coefficients of the geometric fixed point spectrum of homology with constant coefficients $\Phi^G (H\underline{A})_*$, which we denote by $\Phi^G(\underline{A})_*$,
by reducing it to the case where $G$ is an elementary abelian group.
We show (Proposition \ref{nonabelianpgp}) that $\Phi^G H\underline{A}_G=0$ if $G$ is not a $p$-group, and that
$$\Phi^G H\underline{A}_G = \Phi^{G/G_p'} H\underline{A}_{G/G_p'}$$
where $G$ is a $p$-group and $G_p'$ is its Frattini subgroup (Proposition \ref{abelianpgp}).
In the elementary abelian case, the computation was carried out for $A=\Z/p$ in my previous paper \cite{SophieKrizEquivariant} (see also \cite{HollerIgorKrizCoefficients, HollerIgorKrizOrdinary}).

Let first $p=2$.
We have
$$H^*((\Z/2)^n ;\Z/2) = \Z/2[x_1,\dots, x_n]$$
where $x_i$ have cohomological dimension $1$.
Let, for $\alpha = (\alpha_1, \dots, \alpha_n) \in (\Z/2)^n \smallsetminus \{ 0\}$,
$$x_{\alpha} = \alpha_1 x_1 + \dots \alpha_n x_n.$$

For $p>2$, following the notation of \cite{SophieKrizEquivariant}, we have
$$ H^* ((\Z/p)^n ; \Z/p) = \Z/p [z_i] \otimes \Lambda_{\Z/p} [dz_i]$$
where $z_i$ have cohomological dimension $2$ and $dz_i$ have cohomological dimension $1$.
Let, for $\alpha = (\alpha_1,\dots, \alpha_n)\in (\Z/p)^n \smallsetminus \{0\}$,
$$z_\alpha = \alpha_1 z_1 + \dots + \alpha_n z_n,$$
$$dz_\alpha = \alpha_1 dz_1 + \dots + \alpha_n d z_n.$$

\begin{theorem}\label{Prepretheorem} (\cite{ HollerIgorKrizCoefficients, HollerIgorKrizOrdinary, SophieKrizEquivariant})
(a) $\Phi^{(\Z/2)^n} (\underline{\Z/2})$ is the subring of 
$$H^* ((\Z/2)^n;\Z/2) [x_{\alpha}^{-1}\mid \alpha \neq 0]$$
generated by $y_{\alpha} = x_{\alpha}^{-1}$.

(b) For $p>2$, $\Phi^{(\Z / p)^n} (\underline{\Z/p})$ is the subring of
$$H^* ((\Z /p)^n; \Z / p) [z^{-1}_\alpha | \alpha \in (\Z / p )^n \smallsetminus \{0\}]$$
generated by $t_\alpha = z_\alpha^{-1}$ and $u_\alpha = t_\alpha dz_\alpha$.

\end{theorem}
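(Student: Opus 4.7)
\Proofs
The strategy is to exploit the standard identification
$$\Phi^G(H\underline{A})_* \;\cong\; \mathrm{colim}_N\, H^G_{*+NV}(\mathrm{pt};\,\underline{A}),$$
where $V$ is the reduced regular representation of $G$ and the colimit is taken along multiplication by the Euler class $e(V)$. This realizes $\Phi^G(\underline{A})_*$ as the integer-graded part of the localization of the $RO(G)$-graded cohomology $H^\star_G(\mathrm{pt};\underline{A})$ obtained by inverting $e(V)$.

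For $G = (\Z/p)^n$, the reduced regular representation splits as one copy of each non-trivial irreducible. At $p=2$ these are the one-dimensional sign representations $V_\alpha$ indexed by $\alpha \in (\Z/2)^n \smin \{0\}$, with $e(V_\alpha) = x_\alpha$; for $p>2$ they are the two-dimensional rotation representations indexed by $\alpha \in (\Z/p)^n \smin \{0\}$, with $e(V_\alpha) = z_\alpha$. Multiplicativity of Euler classes gives $e(V) = \prod_\alpha x_\alpha$ (resp.\ $\prod_\alpha z_\alpha$), so inverting $e(V)$ coincides with inverting every individual $x_\alpha$ (resp.\ $z_\alpha$).

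The main technical input is a description of the $RO(G)$-graded cohomology $H^\star_{(\Z/p)^n}(\mathrm{pt};\underline{\Z/p})$. I would proceed by induction on $n$: the base case $n=1$ is classical, and the inductive step could be approached through a Serre-type spectral sequence associated with a central extension $\Z/p \to G \to (\Z/p)^{n-1}$. Once this ring is known, inverting the $x_\alpha$ (or $z_\alpha$) and reading off the integer-graded part identifies $\Phi^G(\underline{\Z/p})_*$ with the asserted subring. For $p=2$ only monomials in the $y_\alpha = x_\alpha^{-1}$ land in integer degree; for $p>2$ the odd-degree classes $dz_\alpha$ force one to include the combinations $u_\alpha = t_\alpha\, dz_\alpha$ along with the $t_\alpha$, since their representation-valued grading cancels to an integer.

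The main obstacle is the $RO(G)$-graded cohomology computation itself, together with the verification that the subring generated by the asserted elements accounts for all integer-graded classes in the localization. The multiplicative relations among the $y_\alpha$ (or among the $t_\alpha, u_\alpha$) encode the linear dependencies among the characters $\alpha$ and must be matched against those in the localized $RO(G)$-graded ring without leaving room for extra generators.
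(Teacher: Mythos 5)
Your opening reduction is fine: $\Phi^{(\Z/p)^n}(\underline{\Z/p})_*$ is indeed the integer-graded part of the $RO(G)$-graded coefficients of $H\underline{\Z/p}$ with the Euler class of the reduced regular representation inverted, and inverting $e(V)$ inverts each $x_\alpha$ (resp.\ $z_\alpha$); the paper itself records this identification, and note that it offers no proof of Theorem \ref{Prepretheorem} at all, quoting it from \cite{HollerIgorKrizCoefficients, HollerIgorKrizOrdinary, SophieKrizEquivariant}. From that point on, however, your text is a plan rather than a proof, and the plan hangs the entire content of the theorem on a step that is both unestablished and harder than what is needed: the full $RO(G)$-graded coefficient ring $H\underline{\Z/p}^{\star}_{(\Z/p)^n}$ for $n\geq 2$ is a genuinely difficult object (it has a large ``negative cone'' in addition to the Borel-type part), there is no Serre-type spectral sequence for a central extension computing genuine $RO(G)$-graded coefficients of a point that you can invoke, and in the literature the logic runs essentially the other way: the geometric fixed points are computed first (by isotropy-separation induction on $n$ in \cite{HollerIgorKrizCoefficients, HollerIgorKrizOrdinary}, or by a chain-level identification with the super reciprocal plane of the hyperplane arrangement in \cite{SophieKrizEquivariant}), and the full $RO(G)$-graded ring is organized around that answer. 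So your proposed induction has no engine.

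There is also a structural gap independent of the above. The theorem asserts that $\Phi^{(\Z/p)^n}(\underline{\Z/p})$ is a specific subring of the localized \emph{Borel} cohomology $H^*((\Z/p)^n;\Z/p)[x_\alpha^{-1}]$ (resp.\ $[z_\alpha^{-1}]$), whereas your localization lives in the genuine $RO(G)$-graded theory; when you say ``only monomials in the $y_\alpha$ land in integer degree'' you are silently identifying the two. What is actually required is the comparison map from the localized genuine theory to the localized Borel theory (induced by $H\underline{\Z/p}\wedge \widetilde{E\mathscr{F}} \to F(EG_+,H\underline{\Z/p})\wedge \widetilde{E\mathscr{F}}$), the computation of the target as $H^*(BG;\Z/p)$ with the Euler classes inverted, and, crucially, a proof that on integer degrees this map is \emph{injective} with image exactly the subring generated by the $y_\alpha$ (resp.\ $t_\alpha$, $u_\alpha$). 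That injectivity-plus-image statement, special to elementary abelian groups, is the heart of the cited results, and nothing in your outline rules out either a kernel or extra integer-graded classes in the localization. (The present paper's own machinery, the collapsing spectral sequence of Theorem \ref{spectralsequencecollapses} applied to $\widetilde{E\mathscr{F}_V}=S^{\infty V}$, would give yet another, chain-level route, quite different from yours.) Two smaller points: for $p>2$ the nontrivial real irreducibles are indexed by pairs $\{\alpha,-\alpha\}$, not by all $\alpha\neq 0$, and the base case you call classical is $n=1$ only; neither of these is the main issue.
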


One can also explicitly describe these rings in generators and defining relations:
\begin{theorem} \label{Pretheorem} (\cite{ HollerIgorKrizCoefficients, HollerIgorKrizOrdinary, SophieKrizEquivariant})
(a) For $p=2$, we have
$$\Phi^{(\Z/2)^n}_* (\underline{\Z/2}) \cong \Z/2 [ y_\alpha | \alpha \in (\Z/2)^n \smallsetminus \{ 0 \}] / \sim $$
where $\sim$ denotes the relations
$$y_\alpha y_\beta + y_\alpha y_\gamma +
y_\beta y_\gamma \sim 0 $$
for  $\alpha + \beta + \gamma = 0$.
The elements $y_\alpha$ are in degree $1$. 

(b) For $p>2$,
\beg{oldresult}{\Phi_*^{(\Z/ p )^n} (\underline{\Z/p}) = \Z/p [t_{\alpha}] \otimes \Lambda_{\Z/p} [u_\alpha]/\sim}
where $\sim$ denotes the relations
$$t_{i\alpha} \sim i^{-1} t_{\alpha}, \; u_{i \alpha} \sim u_{\alpha}$$
$$t_\beta t_{\alpha + \beta} + t_\alpha t_{\alpha + \beta}\sim t_\alpha t_\beta$$
$$t_\beta u_{\alpha + \beta } - t_{\alpha + \beta} u_\beta + t_{\alpha + \beta} u_{\alpha} \sim u_\alpha t_{\beta}$$
$$-u_{\beta} u_{\alpha + \beta} + u_{\alpha} u_{\alpha + \beta} \sim u_\alpha u_\beta,$$
where $i \in \Z/p \smallsetminus \{0\}$, for $\alpha, \beta, \alpha+ \beta \in (\Z/p)^n \smallsetminus \{ 0 \}$.
The elements $u_\alpha$ are in degree $1$ and the elements $t_\alpha$ are of degree $2$. 
\end{theorem}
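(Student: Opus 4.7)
The plan is to deduce Theorem \ref{Pretheorem} directly from Theorem \ref{Prepretheorem}, which already identifies $\Phi^{(\Z/p)^n}_*(\underline{\Z/p})$ concretely as a subring of a localization of $H^*((\Z/p)^n;\Z/p)$. What remains is to present this subring by generators and relations.

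First I would verify that the stated relations do hold inside the localization. For part (a), when $\alpha+\beta+\gamma=0$ with all three nonzero, characteristic $2$ forces $x_\gamma = x_\alpha + x_\beta$, so
\[
y_\alpha y_\beta + y_\alpha y_\gamma + y_\beta y_\gamma = \frac{x_\gamma + x_\beta + x_\alpha}{x_\alpha x_\beta x_\gamma} = 0.
\]
For part (b), the relation $t_{i\alpha} \sim i^{-1}t_\alpha$ records the identity $z_{i\alpha} = iz_\alpha$ (and likewise for $dz$), the $tt$-relation is an elementary partial-fraction identity, and the mixed $tu$- and $uu$-relations can be obtained by applying the Bockstein-type derivation $d$ (with $d(z_i)=dz_i$) to the $tt$-identity together with the graded Leibniz rule. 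Combined with the generation statement of Theorem \ref{Prepretheorem}, this produces a surjective graded ring map $\phi$ from the presented ring onto $\Phi^{(\Z/p)^n}_*(\underline{\Z/p})$.

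The heart of the argument is showing that $\phi$ is injective, i.e., that no further relations are needed. My plan is to use the quadratic relations as rewriting rules to reduce every monomial to a \emph{normal form} supported on an $\F_p$-linearly independent subset of $(\Z/p)^n \smallsetminus \{0\}$, then to check that these normal forms map to $\F_p$-linearly independent Laurent (respectively, Laurent-exterior for $p>2$) expressions in the localization. The linear-independence half is clean: after a linear change of variables carrying a chosen linearly independent set of $\alpha$'s onto coordinate vectors, each normal-form monomial becomes a distinct Laurent monomial in the $x_i$ (or a product of distinct Laurent and exterior monomials in the $z_i$, $dz_i$). The reduction half requires verifying termination via a Gröbner-style argument with a suitable monomial order on the $y_\alpha$ (respectively the $t_\alpha, u_\alpha$).

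The main obstacle is precisely this completeness step: showing that the ideal cut out by the listed relations really is the entire kernel of $\phi$, and not merely a proper subideal. Part (b) has the extra complication of tracking exterior signs and confirming that the mixed $tu$- and $uu$-relations are jointly sufficient to bring every product into normal form. This is the bookkeeping carried out in \cite{HollerIgorKrizCoefficients, HollerIgorKrizOrdinary, SophieKrizEquivariant}, and it is where almost all of the work of the proof is concentrated.
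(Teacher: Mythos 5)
This theorem is not proved in the paper at all: it is quoted from \cite{HollerIgorKrizCoefficients, HollerIgorKrizOrdinary, SophieKrizEquivariant}, so there is no internal proof to compare against. Your outline is the right shape and is essentially the strategy of those references (and of this paper's own proof of the integral analogue, Theorem \ref{IntegralCase}): verify the relations inside the localization of Theorem \ref{Prepretheorem} to get a surjection $\phi$ from the presented ring, then show injectivity by producing an additive spanning set of the presented ring and checking its image is linearly independent. Your verification of the relations is correct (for instance $y_\alpha y_\beta + y_\alpha y_\gamma + y_\beta y_\gamma = (x_\alpha+x_\beta+x_\gamma)/(x_\alpha x_\beta x_\gamma) = 0$, and the $tu$- and $uu$-relations follow from the $tt$-identity by the Leibniz rule), and surjectivity is exactly the generation statement of Theorem \ref{Prepretheorem}.

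The gap is the one you name yourself: the completeness/injectivity step is announced but not executed, and it is where all the content lies; pointing to the references does not discharge it, and the ``Gr\"obner-style'' termination claim is left without a monomial order or a decreasing measure, which is not automatic since the quadratic relation replaces a monomial by a sum of two monomials of the same degree. The concrete way to close this, which is what the cited papers do and what this paper repeats for $\underline{\Z}$, is an induction on $n$: writing $\alpha=(\alpha',\epsilon)$ with $\epsilon$ the last coordinate, the relation $y_\alpha y_\beta \sim y_\alpha y_{\alpha+\beta} + y_\beta y_{\alpha+\beta}$ (applied when both $\alpha,\beta$ have last coordinate $1$, so that $\alpha+\beta$ has last coordinate $0$) shows that the presented ring is spanned by (basis of the $(n-1)$-variable ring) times $y_{(0,\dots,0,1)}^{\geq 0}$ or $y_{(\alpha',1)}^{\geq 1}$; this yields the Poincar\'e series $P(R_{\Z/p}) = \frac{1}{(1-x)^n}\prod_{i=1}^n\bigl(1+(p^{i-1}-1)x\bigr)$ recalled in the paper, and a parallel induction in the localization shows these monomials map to linearly independent elements, so $\phi$ is an isomorphism (the odd-prime case is the same bookkeeping with $t_\alpha^{\geq 0}u_\alpha^{\epsilon}$ in place of powers of $y_\alpha$). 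Until that counting or rewriting argument is actually carried out, your proposal is a correct plan rather than a proof.
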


This allows a complete answer for $\underline{\Z}$, the universal constant coefficients.
Fix an element $\alpha_0 \in (\Z/p)^n \smallsetminus \{ 0\}$
and put $\widetilde{u}_\alpha = u_\alpha - u_{\alpha_0}$, ($\widetilde{y}_\alpha = y_\alpha - y_{\alpha_0}$ for $p=2$). For
$p=2$, we also set $t_{\alpha_0} = y^2_{\alpha_0}$.

\begin{theorem}\label{IntegralCase}
(a) For every prime $p$, $\Phi_*^{(\Z / p)^n}(\underline{\Z})$ is the subring of the ring $\Phi_*^{(\Z/p)^n}(\underline{\Z/p})$ on which
the Bockstein
$$\beta: \Phi_*^{(\Z / p)^n} (\underline{\Z / p})\rightarrow \Phi_{*-1}^{(\Z/p)^n} (\underline{\Z/p})$$ 
vanishes.

(b) Explicitly, for $p=2$, we have
$$\Phi^{(\Z/2)^n}_* (\underline{\Z}) \cong \Z / 2 [ \widetilde{y}_\alpha , t_{\alpha_0} | \alpha \in (\Z/2)^n \smallsetminus \{0 \}] /\sim$$
where $\sim$ denotes the relations
$$\widetilde{y}_{\alpha_0} \sim 0$$
$$\widetilde{y}_\alpha \widetilde{y}_\beta + \widetilde{y}_\alpha \widetilde{y}_\gamma + \widetilde{y}_\beta \widetilde{y}_\gamma + t_{\alpha_0}
\sim 0$$
where $\alpha +\beta + \gamma =0$.

(c) For $p>2$
$$\begin{array}{c}
\Phi^{(\Z /p)^n}_* (\underline{\Z}) \cong (\Z /p [t_\alpha |\alpha \in (\Z/p)^n \smallsetminus \{ 0\}] \\
\otimes \Lambda_{\Z/p} [\widetilde{u}_\alpha | \alpha \in (\Z/p)^n \smallsetminus \{ 0\}] )/\sim
\end{array}
$$
where $\sim$ denotes the relations
$$t_{i\alpha} \sim i^{-1} t_{\alpha}, \; \widetilde{u}_{i\alpha} \sim \widetilde{u}_\alpha$$
$$t_\beta t_{\alpha + \beta} + t_\alpha t_{\alpha + \beta} \sim t_\alpha t_\beta$$
\beg{relationspg2}{\widetilde{u}_{\alpha_0} \sim 0}
$$t_\beta \widetilde{u}_{\alpha + \beta} - t_{\alpha + \beta} \widetilde{u}_\beta + t_{\alpha + \beta} \widetilde{u}_\alpha \sim t_\beta \widetilde{u}_\alpha$$
$$ - \widetilde{u}_\beta \widetilde{u}_{\alpha+\beta} + \widetilde{u}_\alpha \widetilde{u}_{\alpha + \beta} \sim \widetilde{u}_\alpha \widetilde{u}_\beta,$$
for $i\in \Z/ p \smallsetminus \{ 0\}$ and $\alpha, \beta, \alpha + \beta \in (\Z /p )^n \smallsetminus \{ 0\}$.
\end{theorem}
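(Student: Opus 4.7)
\Proofs
The plan is to deduce part (a) from the Bockstein long exact sequence associated to the short exact sequence $0 \to \underline{\Z} \xrightarrow{p} \underline{\Z} \to \underline{\Z/p} \to 0$ of constant Mackey functors, then extract the explicit presentations in (b) and (c) by computing the Bockstein on the generators furnished by Theorem \ref{Pretheorem}.

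For (a), applying $\Phi^G$ to the cofiber sequence $H\underline{\Z} \xrightarrow{p} H\underline{\Z} \to H\underline{\Z/p}$ yields
$$\cdots \to \Phi^G_* H\underline{\Z} \xrightarrow{p} \Phi^G_* H\underline{\Z} \to \Phi^G_* H\underline{\Z/p} \xrightarrow{\beta} \Phi^G_{*-1} H\underline{\Z} \to \cdots .$$
To conclude that $\Phi^G_* H\underline{\Z}$ embeds as $\ker \beta$, I would first establish that multiplication by $p$ annihilates $\Phi^G_* H\underline{\Z}$. The key step is identifying $\pi_0 \Phi^G H\underline{\Z} \cong \F_p$: using the isotropy separation cofiber sequence $E\mathcal{P}_+ \to S^0 \to \tilde E\mathcal{P}$ smashed with $H\underline{\Z}$, taking $G$-fixed points, and noting that the map $\pi_0(E\mathcal{P}_+ \wedge H\underline{\Z})^G \to \pi_0 H\underline{\Z}^G = \Z$ is multiplication by $|G|$, one gets $\pi_0 \Phi^G H\underline{\Z} = \Z/|G|$, which via Propositions \ref{nonabelianpgp} and \ref{abelianpgp} reduces to $\F_p$. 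Since $\Phi^G H\underline{\Z}$ is a ring spectrum, $p = 0$ in $\pi_0$ forces $p$ to act as zero on all $\pi_*$, so the long exact sequence breaks into short exact sequences and $\Phi^G_* H\underline{\Z}$ is identified with $\ker \beta$.

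For (b), using $\beta(x_\alpha) = \operatorname{Sq}^1 x_\alpha = x_\alpha^2$ and the derivation property, one computes $\beta(y_\alpha) = -x_\alpha^{-2}\beta(x_\alpha) = 1 \pmod 2$, so $\tilde y_\alpha := y_\alpha - y_{\alpha_0}$ is Bockstein-closed, as is $t_{\alpha_0} = y_{\alpha_0}^2$. Filtering $\Phi^G_* H\underline{\Z/2}$ by powers of $y_{\alpha_0}$, a general element has the form $\sum_k a_k y_{\alpha_0}^k$ with $a_k \in \F_2[\tilde y_\alpha]$, and $\beta(\sum_k a_k y_{\alpha_0}^k) = \sum_k k\, a_k y_{\alpha_0}^{k-1}$; vanishing forces the odd-$k$ terms to vanish, so $\ker \beta$ is exactly the subring generated by $\tilde y_\alpha$ and $t_{\alpha_0}$. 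The relations follow by substituting $y_\mu = \tilde y_\mu + y_{\alpha_0}$ into $y_\alpha y_\beta + y_\alpha y_\gamma + y_\beta y_\gamma \sim 0$ and simplifying in characteristic $2$: the cross terms linear in $y_{\alpha_0}$ appear in pairs and cancel, and the $y_{\alpha_0}^2$ terms contribute $3 t_{\alpha_0} \equiv t_{\alpha_0}$, yielding the stated relation.

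The argument for (c) is parallel: $\beta(z_\alpha) = 0$ and $\beta(dz_\alpha) = z_\alpha$ yield $\beta(t_\alpha) = 0$ and $\beta(u_\alpha) = 1$, so $\tilde u_\alpha = u_\alpha - u_{\alpha_0}$ lies in $\ker \beta$. A similar filtration argument now using $u_{\alpha_0}^2 = 0$ in the exterior algebra and solving $\beta = 0$ coefficient-by-coefficient identifies $\ker \beta$ with the subring $\F_p[t_\alpha] \otimes \Lambda[\tilde u_\alpha]$, modulo the relations obtained by substituting $u_\gamma = \tilde u_\gamma + u_{\alpha_0}$ into those of Theorem \ref{Pretheorem}(b). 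The main obstacle in the whole proof is the $p$-torsion claim in (a); the remaining steps are algebraic and essentially routine, though some care will be needed to confirm that the filtration argument recovers $\ker \beta$ exactly and that no relations beyond those listed appear after the change of variables.
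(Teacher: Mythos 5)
Your overall strategy for part (a) is the same as the paper's: show $\Phi_0^{(\Z/p)^n}(\underline{\Z})\cong\Z/p$, conclude that the ring $\Phi_*^{(\Z/p)^n}(\underline{\Z})$ has characteristic $p$, and then split the Bockstein long exact sequence into short exact sequences to identify $\Phi_*(\underline{\Z})$ with $\ker\beta$. However, your computation of $\pi_0$ is wrong as stated. For geometric fixed points the relevant family is that of \emph{all proper} subgroups, and the image of $\pi_0^G(E\mathscr{F}[G]_+\wedge H\underline{\Z})\to\pi_0^G H\underline{\Z}=\Z$ is the ideal generated by the transfers $[G:H]$ over all proper $H$, which for any nontrivial $p$-group is $p\Z$ (index-$p$ subgroups exist), not $|G|\Z$; you appear to have conflated $E\mathscr{F}[G]$ with $EG$. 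The resulting cokernel is $\Z/p$ directly. Your claimed answer $\Z/|G|$ would have $p\neq 0$ in $\pi_0$ for $n\geq 2$, which would destroy the characteristic-$p$ conclusion, and Propositions \ref{nonabelianpgp} and \ref{abelianpgp} cannot repair this, since $G=(\Z/p)^n$ is already its own Frattini quotient. This step is fixable (the paper instead reads off $\Phi_0=\Z/p$ from the cell structure of $S^{\infty V}$: one non-basepoint $0$-cell with $p$ freely permuted $1$-cells attached), but as written it is incorrect.

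The more serious gap is in (b) and (c). Substituting $y_\mu=\widetilde{y}_\mu+y_{\alpha_0}$ (resp.\ $u_\mu=\widetilde{u}_\mu+u_{\alpha_0}$) only verifies that the listed relations hold, i.e.\ that there is a ring map $\varphi:\widetilde{R}_n\to R_{\Z}$; it proves neither that $\widetilde{y}_\alpha, t_{\alpha_0}$ (resp.\ $t_\alpha,\widetilde{u}_\alpha$) generate all of $\ker\beta$, nor that no further relations hold. Your filtration argument --- write $x=\sum_k a_ky_{\alpha_0}^k$ with $a_k$ polynomial in the $\widetilde{y}_\alpha$ and solve $\beta x=0$ coefficient by coefficient --- does not work, because such expressions are not unique in $R_{\Z/2}$: the defining relations mix the $\widetilde{y}$'s with $y_{\alpha_0}$ (taking $\gamma=\alpha_0$, i.e.\ $\alpha+\beta=\alpha_0$, gives $\widetilde{y}_\alpha\widetilde{y}_\beta=y_{\alpha_0}^2$), so $R_{\Z/2}$ is not free over the relevant subring with $y_{\alpha_0}$-power basis, and the vanishing of $\sum_{k\ \mathrm{odd}}a_ky_{\alpha_0}^{k-1}$ in the quotient ring does not allow you to discard the odd part of a chosen representative; the same objection applies to the $p>2$ case. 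The paper closes exactly this gap by a dimension count: the four-term exact sequence coming from the collapse of the Bockstein spectral sequence gives $P(R_{\Z})=\frac{1}{1+x}P(R_{\Z/p})$, and then an explicit additive basis of $\widetilde{R}_n$ is built by induction on $n$, shown to map to linearly independent elements of $R_{\Z/p}$, and shown to have generating function equal to $P(R_{\Z})$, which simultaneously yields injectivity and surjectivity of $\varphi$. Some argument of this kind (or another structural result about $R_{\Z/p}$ as a module) is indispensable; acknowledging that ``some care will be needed'' leaves precisely the main content of the proof unsupplied.
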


The reductions contained in Proposition \ref{abelianpgp} and Proposition \ref{nonabelianpgp} are quite easy.
However, if one considers the more general problem of calculating $\widetilde{H\underline{A}}_*^G (S^{\infty \gamma})$
for a general finite dimensional representation $\gamma$ of $G$, one gets non-trivial examples.
One such example is treated in Section \ref{ExtraSpecialExample}, where $G$ is a split extraspecial $2$-group and
$\gamma$ is the irreducible representation non-trivial on the center.

The present paper is organized as follows:
In Section \ref{MainMethod}, we discuss our spectral sequences.
In Section \ref{ProofsMainResult}, we discuss the application to geometric fixed points. Section \ref{ExtraSpecialExample} contains the extraspecial group example.

\vspace{5mm}

\section{The Spectral Sequences}\label{MainMethod}

\vspace{5mm}

For a $G$-equivariant spectrum $E$, we will need to use the homotopy co-fixed point (Borel homology) spectrum
\beg{BorelSpectrumDefn}{E_{hG} = (E\wedge EG_+)^G}
where $EG$ is a non-equivariantly contractible free $G$-CW complex and for a space $X$, we write
$X_+ = X \coprod \{*\}$.
The formula \rref{BorelSpectrumDefn} includes a key fact called the Adams isomorphism (\cite{LewisMaySteinbergerEquivariant} II \textsection 7):
$G$-equivariant cell spectra with $G$-free cells can be identified with naive (i.e. non-equivariant) cell spectra with a free cellular $G$-action.
The Adams isomorphism says that for any cell $G$-spectrum $E$,
$E_{hG}$ is equivalent to $(E\wedge EG_+)/G$ where $E\wedge EG_+$ is considered
as a naive $G$-spectrum.
%Thus, for example,
%$$(HM_{hG})_n = H_n (G ; M (G/\{ e\})).$$

Recall that a {\em family} $\mathscr{F}$ is defined as a set of subgroups of $G$ that is closed under sub-conjugacies.
For a family $\mathscr{F}$, we have a $G$-CW complex $E\mathscr{F}$ such that
$$\begin{array}{c}
E\mathscr{F}^H \simeq * \text{, for } H\in \mathscr{F}\\
E\mathscr{F}^G \simeq \emptyset \text{, for } H\notin \mathscr{F}.\\
\end{array}$$
If we denote by $\widetilde{X}$ the unreduced suspension of a $G$-space $X$, we have
$$\begin{array}{c}
\widetilde{E\mathscr{F}}^H \simeq * \text{, for } H\in \mathscr{F}\\
\widetilde{E\mathscr{F}}^G \simeq S^0\text{, for } H\notin \mathscr{F}.\\
\end{array}$$

If $V$ is a real $G$-representation, denote by $S(V)$ the unit sphere of $V$
and by $S^V$ the union of the $1$-point compactifications of $S^W$ for finite dimensional subrepresentations $W$ of $V$.
If we set $\infty V = \bigoplus_{\infty} V$, then $S(\infty V)$ is a model for $E\mathscr{F}_V$
where $\mathscr{F}_V = \{ H\subseteq G | V^H \neq 0\}$.
Thus $S^{\infty V}$ is a model for $\widetilde{E\mathscr{F}}_V$.
Since $S^{\infty V} \wedge S^{\infty V} = S^{\infty V}$, for a commutative ring spectrum $E$,
$$\widetilde{E}_* \widetilde{E\mathscr{F}}_V = \widetilde{E}_* S^{\infty V}$$
is a commutative ring.
Here $\widetilde{E}_* X$, for a $G$-spectrum $E$ and a based $G$-CW complex $X$ (recall that the base point is
required to be $G$-fixed), is the equivariant reduced homology of $X$, i.e. $\pi_* (E\wedge \Sigma^{\infty} X)$
(without adding a disjoint base point).
Note that this is also the $\Z$-graded part of the $RO(G)$-graded coefficient ring of $\alpha_V^{-1} E$ where
$\alpha_V \in \pi_{-V} E$ is the class obtained from the inclusion $S^0 \rightarrow S^V$.

For a finite group $G$, a family $\mathscr{F}$, and an $H\in \mathscr{F}$, define the height of $H$ inductively by
$$h_{\mathscr{F}}(H) = \text{max} \{0, h_{\mathscr{F}} (K) | K\in \mathscr{F}, H\subsetneq K\}+1.$$

Now let $X$ be a $G$-CW-complex. Consider the family
$$\mathscr{F}=\mathscr{F}_X = \{ H\subseteq G | X^H \neq \emptyset \}.$$
Let $E$ be a $G$-spectrum.
Then we have a spectral sequence converging to the $E$-homology of $X$, using Borel homology of parts of $X$ of the same isotropy.
There are two versions of the spectral sequence, one for the unreduced homology of $X$, the other for the reduced homology
of its unreduced suspension. Keeping track of terms can be delicate, so we list both versions:

\begin{proposition}\label{spectralsequence}
We have a spectral sequence
$$E^1_{p,q} \Rightarrow E_{p+q} X$$
where
\beg{spectralsequenceformula}{E^1_{p,q} = \bigoplus_{(H), H\in \mathscr{F}_X, h_{\mathscr{F}_X}(H)=p} ((E^H \wedge (X^H / \bigcup_{H\subsetneq K} X^K))_{hW(H)})_{p+q}.}
(Here $W(H)=N(H)/H$ where $N(H)$ is the normalizer of $H$ in $G$, and $(H)$ runs through the conjugacy classes of $H\in \mathscr{F}_X$.)
\end{proposition}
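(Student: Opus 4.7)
The plan is to construct this spectral sequence from a standard CW-filtration of $X$, indexed by the height function on $\mathscr{F}_X$. Specifically, I would let $F_p X \subseteq X$ be the $G$-subcomplex generated by the equivariant cells $G/H \times D^n$ with $h_{\mathscr{F}_X}(H) \leq p$. To see this is a well-defined subcomplex, note that the attaching map of a cell of isotropy $H$ sends its boundary into cells of isotropy $K \supseteq H$, and since $K \supseteq H$ implies $h_{\mathscr{F}_X}(K) \leq h_{\mathscr{F}_X}(H)$ (any chain out of $K$ gives a chain out of $H$), those boundary cells also have height $\leq p$. Finiteness of $G$ makes $\mathscr{F}_X$ finite and the heights bounded, so $F_0 X = \emptyset$ and $F_p X = X$ for large $p$. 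Applying $E$-equivariant homology to this filtration gives an $E^1$ spectral sequence $E^1_{p,q} = E^G_{p+q}(F_p X / F_{p-1} X) \Rightarrow E^G_{p+q}(X)$ that converges strongly.

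Next, I would identify the subquotient $F_p X / F_{p-1} X$. For each conjugacy class $(H)$ of height-$p$ subgroups, the cells of isotropy exactly $H$ assemble into the stratum $G \cdot X_H = G \times_{N(H)} X_H$, where $X_H = X^H \smallsetminus X^{>H}$ and $X^{>H} = \bigcup_{K \in \mathscr{F}_X,\, K \supsetneq H} X^K$. The boundary of this stratum inside the $H$-fixed set is exactly $X^{>H}$, which sits in $F_{p-1} X$ by the height argument above, while $X_H$ itself is a union of open height-$p$ cells. Collapsing $X^{>H}$ in $X^H$ and inducing from $N(H)$ to $G$ therefore yields the identification
$$F_p X / F_{p-1} X \;\simeq\; \bigvee_{(H),\, h_{\mathscr{F}_X}(H)=p} G_+ \wedge_{N(H)} (X^H / X^{>H}).$$

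The third step is to match each summand with the claimed Borel-homology term. For $Y = X^H / X^{>H}$, the Wirthmüller isomorphism gives
$E^G_*(G_+ \wedge_{N(H)} Y) \cong \pi_*^{N(H)}(E \wedge Y).$
Since $H$ acts trivially on $X^H$, and hence on $Y$, taking $H$-fixed points commutes with smashing against $Y$, so $(E \wedge Y)^H \simeq E^H \wedge Y$. Combined with the iterated fixed-point formula $\pi_*^{N(H)} = \pi_*^{W(H)} \circ (-)^H$, this yields $\pi_*^{N(H)}(E \wedge Y) \cong \pi_*^{W(H)}(E^H \wedge Y)$. Finally, observe that $W(H) = N(H)/H$ acts freely on $X_H$ (a point $x \in X_H$ has full $G$-stabilizer $H$, so trivial $W(H)$-stabilizer). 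Consequently $Y$ is $W(H)$-free away from the basepoint, so $E^H \wedge Y \wedge EW(H)_+ \to E^H \wedge Y$ is a $W(H)$-equivalence, and Borel homology recovers equivariant homotopy: $\pi_*^{W(H)}(E^H \wedge Y) \cong \pi_*((E^H \wedge Y)_{hW(H)})$, which is the asserted formula \rref{spectralsequenceformula}.

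The main obstacle I expect is the point-set bookkeeping in the second and third steps, in particular verifying that the identification $(E \wedge Y)^H \simeq E^H \wedge Y$ for a space $Y$ with trivial $H$-action is valid in the model of equivariant spectra being used (a projection-formula type fact that requires some care with cofibrant replacements), and similarly ensuring the wedge decomposition of $F_p X / F_{p-1} X$ is a genuine (not merely up-to-homotopy) equivalence of based $G$-CW complexes so that we may legitimately apply $E$-homology termwise. Once these identifications are set up, the assembly of the spectral sequence from the exact couple is standard, and convergence is immediate from boundedness of the filtration.
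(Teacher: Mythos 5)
Your proposal is correct and follows essentially the same route as the paper: filter $X$ by the height of the isotropy (your cellwise filtration coincides with the paper's $F'_p X=\bigcup_{h_{\mathscr{F}}(H)\leq p}X^H$), identify $F_pX/F_{p-1}X$ as a wedge over conjugacy classes of height-$p$ subgroups of spaces induced from $N(H)$, and use that $X^H/\bigcup_{H\subsetneq K}X^K$ is $W(H)$-free away from the basepoint together with iterated fixed points ($E^G=(E^H)^{G/H}$) and the Adams isomorphism to rewrite each term as the stated Borel homology. The paper carries this out for the based version (Proposition \ref{spectralsequencebased}) and notes the unreduced case is analogous, so your argument is the same proof in the unreduced setting, including the point-set caveats you flag.
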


\begin{proposition}\label{spectralsequencebased}
We have a spectral sequence
$$E^1_{p,q} \Rightarrow \widetilde{E}_{p+q} \widetilde{X}$$
where
$$E_{0,q}^1 =E_q(*)$$
\beg{spectralsequencebasedformula}{E_{p,q}^1 =\bigoplus_{(H), H\in \mathscr{F}_X, h_{\mathscr{F}_X}(H)=p} ((E^H \wedge (X^H/ \bigcup_{H\subsetneq K} X^K))_{h W(H)})_{p+q-1}.}
\end{proposition}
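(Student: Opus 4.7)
The plan is to mimic the proof of Proposition \ref{spectralsequence} but on the based $G$-space $\widetilde{X}$, arranging the filtration so that the two cone points enter as an extra piece at the bottom, which is what produces the $E_q(*)$ row. Let $v_0,v_1\in\widetilde{X}$ be the two cone points (both $G$-fixed), with $v_0$ the basepoint, and let
$$X_p=\bigcup_{H\in\mathscr{F}_X,\,h_{\mathscr{F}_X}(H)\le p}X^H$$
be the isotropy-height filtration of $X$ used to prove Proposition \ref{spectralsequence} (so $X_0=\emptyset$). Define
$$Y_0=\{v_0,v_1\}\cong S^0,\qquad Y_p=\widetilde{X_p}\subset\widetilde{X}\text{ for }p\ge 1,$$
where $\widetilde{X_p}$ denotes the sub-$G$-CW-complex of $\widetilde{X}$ given by the unreduced suspension over $X_p$. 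Then $\{Y_p\}$ is an increasing filtration of $\widetilde{X}$ by based sub-$G$-CW-complexes.

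The key computation is the identification of the subquotients. For $p\ge 1$ there is a $G$-equivariant homeomorphism of based spaces
$$Y_p/Y_{p-1}\;\cong\;\Sigma(X_p/X_{p-1}),$$
because collapsing $\widetilde{X_{p-1}}$ inside $\widetilde{X_p}$ crushes both cone points together with all of $X_{p-1}$ to the basepoint, which is precisely the definition of the reduced suspension of $X_p/X_{p-1}$ (to be read as $X_{1+}$ when $p=1$). The filtration $\{Y_p\}$ then yields a spectral sequence
$$E^1_{p,q}=\widetilde{E}_{p+q}(Y_p/Y_{p-1})\;\Longrightarrow\;\widetilde{E}_{p+q}\widetilde{X}.$$
For $p=0$ this gives $E^1_{0,q}=\widetilde{E}_q(S^0)=E_q(*)$, matching the bottom line of the statement. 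For $p\ge 1$, the suspension isomorphism gives
$$E^1_{p,q}=\widetilde{E}_{p+q-1}(X_p/X_{p-1}),$$
which is exactly the $E^1$-term of Proposition \ref{spectralsequence} with the total degree shifted down by one.

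Decomposing $X_p/X_{p-1}$ as the wedge
$$\bigvee_{(H),\,h_{\mathscr{F}_X}(H)=p}G_+\wedge_{N(H)_+}\bigl(X^H\big/\bigcup_{H\subsetneq K}X^K\bigr)$$
over conjugacy classes of subgroups of height $p$, and converting the resulting $\widetilde{E}$-homology to Borel homology over $W(H)$ via the Wirthm\"uller and Adams isomorphisms---exactly as in the proof of Proposition \ref{spectralsequence}---yields the stated formula (\ref{spectralsequencebasedformula}). The genuine technical work is in this last step (the wedge decomposition and the Adams isomorphism producing the Borel homology of $W(H)$), but this work is identical to what is already needed for Proposition \ref{spectralsequence}; the only new content is the bookkeeping of the suspension shift and the appearance of the extra $E_q(*)$ row coming from the cone points. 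I therefore expect no obstacle beyond the clean management of these two items.
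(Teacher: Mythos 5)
Your proposal is correct and follows essentially the same route as the paper: the paper filters $\widetilde{X}$ by $F_0\widetilde{X}=S^0$ and $F_p\widetilde{X}=\bigcup_{h_{\mathscr{F}}(H)\le p}(\widetilde{X})^H=\widetilde{F'_pX}$, identifies $F_p\widetilde{X}/F_{p-1}\widetilde{X}\cong\Sigma(F'_pX/F'_{p-1}X)$, and then decomposes by conjugacy classes and applies the Adams isomorphism to the free $W(H)$-complexes $X^H/\bigcup_{H\subsetneq K}X^K$ to obtain the Borel homology terms, exactly as you outline. The only cosmetic difference is that the paper carries out the wedge/induction/Borel step in this based setting directly rather than citing it from the unbased case.
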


\vspace{5mm}

The proofs of these statements are sufficiently similar to only give one of them.
We prove Proposition \ref{spectralsequencebased} which is more closely related to our applications.

\vspace{5mm}

\noindent {\em Proof of Proposition \ref{spectralsequencebased}:}
Define an increasing $G$-equivariant filtration of $X$
by
$$F'_p X = \bigcup_{h_{\mathscr{F}}(H)\leq p }X^H = \bigcup_{h_{\mathscr{F}}(H)= p } X^H.$$
Then define an increasing $G$-equivariant filtration of $\widetilde{X}$ by
$$F_0 \widetilde{X} = S^0$$
$$F_p \widetilde{X} = \bigcup_{h_{\mathscr{F}}(H)\leq p} (\widetilde{X})^H = \bigcup_{h_{\mathscr{F}}(H) = p}  (\widetilde{X})^H .$$

We have a spectral sequence
$$E^1_{p, q} = \widetilde{E}_{p+q} ( F_p \widetilde{X} / F_{p-1} \widetilde{X}) \Rightarrow \widetilde{E}_{p+q } (\widetilde{X}) = (E\wedge\widetilde{X})_{p+q}.$$
%Clearly, this spectral sequence converges to $\widetilde{E}_{p+q } (S^{\infty V})$.
By definition, for $p\geq 1$, 
$$F_p \widetilde{X} / F_{p-1} \widetilde{X} = \bigcup_{h_{\mathscr{F}}(H) = p } (\widetilde{X})^H/\bigcup_{h_{\mathscr{F}}(H) \leq p-1} (\widetilde{X})^H.$$
On the other hand,
$$F_p \widetilde{X} / F_{p-1} \widetilde{X}= (F_p \widetilde{X} / F_0 \widetilde{X}) )/  (F_{p-1} \widetilde{X} / F_0 \widetilde{X})=$$
$$= (\Sigma F'_p (X)_+ )/ (\Sigma F'_{p-1} (X)_+) = $$
$$=\Sigma  (F'_p (X)/ F'_{p-1} (X)).$$
Thus,
$$\widetilde{E}_{p+q}(F_p \widetilde{X} / F_{p-1} \widetilde{X})= \widetilde{E}_{p+q-1} (F'_p (X)/ F'_{p-1} (X)).$$

Now, we have
$$F'_p (X)/ F'_{p-1} (X) = \bigcup_{h_{\mathscr{F}}(H)=p} X^H / \bigcup_{h_{\mathscr{F}}(H)<p} X^H=$$
$$= \bigvee_{h_{\mathscr{F}}(H) = p} \left( X^H / \bigcup_{H\subsetneq K\in\mathscr{F}} X^K \right).$$
Note that 
$$X^H / \bigcup_{H\subsetneq K\in \mathscr{F}} X^K$$ 
is a free based $W(H)$-CW complex.

On the other hand,
$$\widetilde{E}_{p+q -1}^G (\bigvee_{h_{\mathscr{F}}(H) = p} (X^H / \bigcup_{H\subsetneq K\in \mathscr{F}} X^K))=$$
$$= \bigoplus_{(H), \; h_{\mathscr{F}}(H)= p} \widetilde{E}_{p+q-1}^G (\bigvee_{H' = gHg^{-1}} (X^H / \bigcup_{H'\subsetneq K\in \mathscr{F}} X)^K) )=$$
$$ = \bigoplus_{(H), \; h_{\mathscr{F}}(H) = p} \widetilde{E}^{N(H)}_{p+q-1} (X^H / \bigcup_{H\subsetneq K \in \mathscr{F}}X^K).$$

To justify the third isomorphism above, note that $G$ acts transitively on the conjugacy classes of $H$
and thus
$$\bigvee_{H' = gHg^{-1}} (X^H / \bigcup_{H'\subsetneq K\in \mathscr{F}} X)^K) $$
is the pushforward from $N(H)$ to $G$ of
$$X^H / \bigcup_{H\subsetneq K \in \mathscr{F}}X^K.$$

Therefore,
$$E^1_{p, q} = \bigoplus_{(H), \; h_{\mathscr{F}}(H) = p} \widetilde{E}^{N(H)}_{p+q-1} ( (X^H/\bigcup_{H\subsetneq K} X^K) \wedge EW(H)_+)=$$
$$=\bigoplus_{(H), \; h_{\mathscr{F}}(H) = p} ((E^H \wedge (X^H/\bigcup_{H\subsetneq K} X^K ))_{hW(H)})_{p+q-1},$$
since $E^G = (E^K )^{G/K}$.

\qed

\vspace{5mm}

We shall be especially interested in the case of classifying spaces of families.
Consider, for a family $\mathscr{F}$ and a group $H\in \mathscr{F}$, the poset
\beg{POSETPGHDefn}{P^{\mathscr{F}}_H=\{ K \in \mathscr{F} | K\supsetneq H\}}
with respect to inclusion. Note that this poset has a $N(H)$-action by conjugation.

For a poset $P$, denote by $|P|$ the nerve (also called the classifying space or bar construction) of $P$, which one
defines as the geometric realization of the simplicial set whose $n$-simplicies are chains
$$x_0 \leq \dots \leq x_n$$
where faces are given by deletions and degeneracies by repetitions.
This is a special case of the nerve of a category where $n$-simplicies are composable $n$-tuples of morphisms.

\vspace{5mm}

\begin{corollary}
We have a spectral sequence
$$E^1_{p,q} \Rightarrow (E\wedge \widetilde{E\mathscr{F}})_{p+q} = \widetilde{E}_{p+q} \widetilde{E\mathscr{F}}$$
where
$$E^1_{0,q}=E_q(*)$$
and for $p>0$,
\beg{Specialspectralsequencebased}{E^1_{p,q} = \bigoplus_{(H), \; H\in \mathscr{F}, \\ h_{\mathscr{F}}(H)= p} ((E^H \wedge \widetilde{|P^{\mathscr{F}}_H|})_{h W(H)})_{p+q-1}}
where $(H)$ runs through the conjugacy classes of groups $H\in \mathscr{F}$.
\end{corollary}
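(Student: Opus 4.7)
The plan is to apply Proposition \ref{spectralsequencebased} to the $G$-CW complex $X = E\mathscr{F}$. By the defining property of $E\mathscr{F}$, the isotropy family $\mathscr{F}_X$ coincides with $\mathscr{F}$. The $p=0$ contribution $E^1_{0,q} = \widetilde{E}_q(S^0) = E_q(*)$ is immediate from the proof of that proposition, where $F_0 \widetilde{X} = S^0$. The content of the corollary is therefore to identify, for each $H \in \mathscr{F}$ with $h_\mathscr{F}(H) = p \geq 1$, the based $W(H)$-space
$$X^H \Big/ \bigcup_{H \subsetneq K \in \mathscr{F}} X^K$$
with $\widetilde{|P^{\mathscr{F}}_H|}$, so that (\ref{spectralsequencebasedformula}) specializes to (\ref{Specialspectralsequencebased}).

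For this identification, write $U_H = \bigcup_{H \subsetneq K \in \mathscr{F}} X^K$. Since $X^H = E\mathscr{F}^H$ is $W(H)$-equivariantly contractible and the inclusion $U_H \hookrightarrow X^H$ is a $W(H)$-cofibration, one obtains a $W(H)$-equivariant equivalence $X^H/U_H \simeq \widetilde{U_H}$ (unreduced suspension, in the same two-pole convention used throughout Section \ref{MainMethod}), because a contractible space containing $U_H$ as a cofibrant subspace is equivalent to the cone $CU_H$ relative to $U_H$, and $CU_H/U_H = \widetilde{U_H}$.

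It then remains to identify $U_H \simeq |P^{\mathscr{F}}_H|$ as $W(H)$-spaces. Apply the nerve lemma to the $W(H)$-equivariant cover $\{X^K\}_{K \in P^{\mathscr{F}}_H}$ of $U_H$: each $X^K$ is contractible, and any $n$-fold intersection equals $X^{\langle K_0, \ldots, K_n\rangle}$, which is contractible when $\langle K_0, \ldots, K_n\rangle \in \mathscr{F}$ and empty otherwise. So $U_H$ is equivalent to the nerve of this cover, a simplicial complex whose simplices are the finite subsets of $P^{\mathscr{F}}_H$ whose generated subgroup lies in $\mathscr{F}$. Since any such simplex $\{K_0, \ldots, K_n\}$ has $\langle K_0, \ldots, K_n \rangle$ again in $P^{\mathscr{F}}_H$, one may $W(H)$-equivariantly deformation-retract this nerve onto its subcomplex of chains by collapsing each such simplex toward the vertex $\langle K_0, \ldots, K_n \rangle$; the resulting subcomplex is exactly the order complex $|P^{\mathscr{F}}_H|$.

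The main obstacle is executing the last retraction $W(H)$-equivariantly at the point-set level. The cleanest way to circumvent it is to fix at the outset a specific model of $E\mathscr{F}$, for instance the realization of the two-sided bar construction on the orbit category restricted to $\mathscr{F}$, or equivalently a simplicial model whose $H$-fixed points for $H \in \mathscr{F}$ are the realization of the chain poset of $\{K \in \mathscr{F} \mid K \supseteq H\}$. In such a model, $U_H$ is tautologically the realization of the sub-simplicial-set spanned by chains in $P^{\mathscr{F}}_H$, so the identification $U_H = |P^{\mathscr{F}}_H|$ holds literally and $W(H)$-equivariantly, and substituting into (\ref{spectralsequencebasedformula}) gives (\ref{Specialspectralsequencebased}).
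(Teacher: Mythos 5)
Your skeleton is the same as the paper's (apply Proposition \ref{spectralsequencebased} to $X=E\mathscr{F}$, note $\mathscr{F}_X=\mathscr{F}$, and identify each subquotient $X^H/U_H$, $U_H=\bigcup_{H\subsetneq K}X^K$, with $\widetilde{|P^{\mathscr{F}}_H|}$ inside the Borel construction), but two of your steps are not correct as stated, and both come from insisting on honest $W(H)$-equivariant identifications. First, $E\mathscr{F}^H$ is \emph{not} $W(H)$-equivariantly contractible in general: for $H\leq L\leq N(H)$ its $L/H$-fixed points are $E\mathscr{F}^L$, which is empty whenever $L\notin\mathscr{F}$ (e.g.\ for $\mathscr{F}=\mathscr{F}[G]$ and $H$ a proper normal subgroup, $E\mathscr{F}^{N(H)}=E\mathscr{F}^G=\emptyset$). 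Hence your claimed $W(H)$-equivariant equivalence $X^H/U_H\simeq\widetilde{U_H}$ is unjustified and in fact false equivariantly: already for $\mathscr{F}=\{e\}$, $H=e$, it compares $EG_+$ with $S^0$, which agree only non-equivariantly. Second, the fix you propose for the nerve step does not do what you claim: the orbit-category bar construction model of $E\mathscr{F}$ has $H$-fixed points the nerve of the comma category $G/H\downarrow\mathscr{O}_{\mathscr{F}}$, whose objects are cosets $gK$ with $g^{-1}Hg\subseteq K$; the comparison $(K,gK)\mapsto gKg^{-1}$ to the poset $\{K\in\mathscr{F}\mid K\supseteq H\}$ is an equivalence of categories but not an isomorphism (distinct cosets give the same subgroup whenever $N(K)\neq K$), so $U_H$ is not ``tautologically'' $|P^{\mathscr{F}}_H|$ in that model, and you have not produced any model whose $H$-fixed points are literally the order complexes. (Also, ``collapsing a simplex toward the vertex $\langle K_0,\dots,K_n\rangle$'' is not literally a retraction of that simplex, since that vertex need not belong to it; one needs the barycentric-subdivision/closure argument.)

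Both difficulties evaporate once you use the observation on which the paper's proof actually turns: the $E^1$-term is the Borel construction $(E^H\wedge-)_{hW(H)}$, so it suffices to exhibit $W(H)$-equivariant maps (or zigzags) that are merely \emph{non-equivariant} equivalences. With that principle, your second paragraph is repaired by the zigzag $X^H/U_H\leftarrow X^H\cup_{U_H}CU_H\rightarrow\widetilde{U_H}$ (collapse the cone, respectively collapse the non-equivariantly contractible $X^H$), and your nerve-lemma comparison of $U_H$ with $|P^{\mathscr{F}}_H|$ goes through because all the maps involved (nerve of the cover $\{X^K\}_{K\in P^{\mathscr{F}}_H}$, subdivision, $S\mapsto\langle S\rangle$) are $W(H)$-equivariant and non-equivariant equivalences; there is no need for a literal point-set retraction. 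The paper obtains the same identification more directly by writing $U_H=\pi_{\#}F$ and using the equivariant, non-equivariantly acyclic maps $B(*,P^{\mathscr{F}}_H,F)\rightarrow\pi_{\#}F$ and $B(*,P^{\mathscr{F}}_H,F)\rightarrow B(*,P^{\mathscr{F}}_H,*)=|P^{\mathscr{F}}_H|$. So your route (nerve lemma plus collapse) is a workable alternative to the bar-construction comparison, but as written the equivariance claims in paragraph two and the ``fixed model'' in paragraph four constitute genuine gaps.
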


\begin{proof}
Apply Proposition \ref{spectralsequencebased} to $X=E\mathscr{F}$. Note that $\mathscr{F}_{E\mathscr{F}} = \mathscr{F}$.
Let $H\in \mathscr{F} = \mathscr{F}_X$. Then 
$$E\mathscr{F}^H \simeq *.$$
We may realize
the system of spaces $(E\mathscr{F}^K)_{H\subsetneq K \in \mathscr{F}}$ as a $N(H)$-equivariant functor
$$F:P_H^{\mathscr{F}} \rightarrow \Delta^{Op} \text{-}Set$$
where the right hand side denotes the category of simplicial sets, such that the canonical maps
$$\operatornamewithlimits{colim}_{y<x} F(y) \rightarrow F(x)$$
are injective for all $x\in P_H^{\mathscr{F}}$.
Now if we denote by $\pi: P_H^{\mathscr{F}} \rightarrow *$ the terminal map, $\bigcup_{H\subsetneq K \in \mathscr{F}} E\mathscr{F}^K$
is the left Kan extension $\pi_{\#} F$.
However, by our injectivity assumption, the canonical $N(H)$-equivariant map
\beg{LeftKanExtpi}{L\pi_{\#} F \rightarrow \pi_{\#} F}
(where $L$ denotes the left derived functor)
is a non-equivariant equivalence.
Additionally, the left hand side can be expressed as the $2$-sided bar construction
$B(*, P_H^{\mathscr{F}}, F)$.
Since the values of $F$ on objects are contractible, we have an $N(H)$-equivariant map
\beg{BarConstructionMap}{B(*, P_H^{\mathscr{F}}, F) \rightarrow B(*, P_H^{\mathscr{F}}, *) = |P_H^{\mathscr{F}}|}
which is a non-equivariant equivalence.
The equivariant maps \rref{LeftKanExtpi}, \rref{BarConstructionMap} induce equivalences on homotopy fixed points, since
they are non-equivariant equivalences.
Thus, we have an equivalence
$$(E^H \wedge (E\mathscr{F}^H / \bigcup_{H\subsetneq K} E\mathscr{F}^K))_{hW(H)} \sim (E^H \wedge \widetilde{|P^{\mathscr{F}}_H|})_{h W(H)}$$
as required.

%$$|P^{\mathscr{F}}_H| \rightarrow \bigcup_{H\subsetneq K \in \mathscr{F}} E\mathscr{F}^K$$
%(which is then necessarily an equivalence) because for any poset $P$, $|P|$ is a union of $|P_x| \simeq *$ where $x\in P$,
%$$P_x = \{ y\in P | x\leq y\}.$$
%Thus,
%$$E\mathscr{F}^H / \bigcup_{H\subsetneq K} E\mathscr{F}^K \sim \widetilde{|P_H^{\mathscr{F}}|}.$$

\end{proof}

Again, there is also an unbased version for $E\mathscr{F}$ instead of $\widetilde{E\mathscr{F}}$.

\vspace{5mm}

\begin{theorem}\label{spectralsequencecollapses}
If $G$ is a $p$-group and $E=H\underline{\Z/p}$, then the spectral sequence \rref{spectralsequenceformula} collapses to $E^1$.
Additionally, the spectral sequence \rref{spectralsequencebasedformula} collapses to $E^1$ when $X^G = \emptyset$.
\end{theorem}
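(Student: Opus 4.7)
The plan is to exhibit a splitting of the abutment into summands that match the $E^1$-page termwise, forcing collapse. The key input is that for a $p$-group $G$ and the constant Mackey functor $\underline{\Z/p}$, all transfer maps between distinct orbit types vanish modulo $p$.

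First, I would pin down the $E^1$-page concretely. For each subgroup $H\subseteq G$, the categorical fixed-point spectrum $(H\underline{\Z/p})^H$ is naturally identified with the equivariant Eilenberg--MacLane spectrum of the constant Mackey functor $\underline{\Z/p}$ over $W(H)=N(H)/H$, since the constant Mackey functor is preserved under the fixed-point construction; moreover, the induced $W(H)$-action on its homotopy is trivial. Because $Y_H := X^H / \bigcup_{H\subsetneq K} X^K$ is a free based $W(H)$-CW complex, the Adams isomorphism reduces the Borel construction to ordinary non-equivariant homology of the orbit space:
$$\bigl((H\underline{\Z/p})^H \wedge Y_H\bigr)_{hW(H)} \simeq H\Z/p \wedge (Y_H/W(H)).$$
Thus each summand of $E^1$ is the reduced $\Z/p$-homology of the orbit-type stratum $Y_H/W(H)$ in $X/G$.

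Next, I would compute the abutment $\widetilde{H}^G_*(\widetilde X;\underline{\Z/p})$ via the cellular Bredon--Mackey chain complex. In the constant Mackey functor $\underline{\Z/p}$, all restrictions are identities, and the transfer along an inclusion $H\subsetneq K$ is multiplication by the index $[K:H]$. For a $p$-group $G$ and any proper subgroup inclusion, this index is divisible by $p$ and therefore vanishes mod $p$. Consequently the boundary operator in the Bredon--Mackey complex cannot cross orbit types: the chain complex splits as a direct sum indexed by conjugacy classes $(H)$, with the $(H)$-summand being the ordinary cellular $\Z/p$-chain complex of $Y_H/W(H)$. Taking homology,
$$\widetilde{H}^G_*\bigl(\widetilde X;\underline{\Z/p}\bigr) \;\cong\; \bigoplus_{(H)} \widetilde{H}_*\bigl(Y_H/W(H);\Z/p\bigr),$$
with the appropriate suspension shift.

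Comparison shows that the $E^1$-page and the abutment agree termwise in each total degree. Since the spectral sequence converges to a subquotient of $E^1$, equality of total ranks forces all differentials to vanish, so the spectral sequence collapses at $E^1$. The reduced case with $X^G=\emptyset$ is covered by the same argument, with the $E^1_{0,*}=E_*(\ast)$ base term accounting for the two cone points of $\widetilde X$, which furnish the only $(G)$-cells in that case. The unreduced version proceeds identically without the base-shift issue. The hard part will be tracking the Bredon--Mackey differential carefully: one must verify that every component between distinct orbit types factors through a transfer of a proper subgroup inclusion, so that the index-divisibility argument genuinely applies and no hidden contributions survive.
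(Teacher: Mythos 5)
Your proposal is essentially the paper's own argument: both compute $H\underline{\Z/p}_{\;*}^G(X)$ from the Bredon chain complex $C^G(X)\otimes_{\mathscr{O}_G}\underline{\Z/p}$ and observe that any component of the differential crossing isotropy types factors through the covariant (transfer) map of the constant co-coefficient system, i.e.\ multiplication by an index $[K:H]\equiv 0 \bmod p$ for a $p$-group, so the complex splits over conjugacy classes of isotropy into exactly the complexes $\widetilde{C}^{cell}(X^H/\bigcup_{H\subsetneq K}X^K)\otimes_{\Z[W(H)]}\Z/p$ computing the $E^1$-terms. The one step to tighten is your final deduction: comparing total ranks of $E^1$ and the abutment is inconclusive when $X$ is infinite (the homology groups may be infinite-dimensional), so instead note that your splitting is compatible with the height filtration defining the spectral sequence, whence the filtered complex is isomorphic to its associated graded and all differentials $d^r$, $r\geq 1$, vanish.
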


\begin{proof}
Again, the proofs of the reduced and unreduced cases are similar. We treat the unreduced case this time.

Suppose $G$ is a $p$-group and $X$ is a $G$-CW-complex.
Then $H\underline{\Z /p}_{\;*}^G X$ can be calculated on the chain level.
Let $C^G(X)$ be the cellular chain complex of $X$ in the category of $G$-coefficient systems in the sense of Bredon \cite{Bredon},
i.e. functors $\mathscr{O}_G^{Op} \rightarrow Ab$ where $\mathscr{O}_G$ is the orbit category.
This is defined by
$$(C_n^G(X))(G/H) = C_n^{\text{cell}}(X^H).$$
Then we have
$$H\underline{A}_n(X) = H_n (C^G(X) \otimes_{\mathscr{O}_G} \underline{A})$$
where $\underline{A}$ is the constant co-coefficient system for an abelian group $A$,
i.e. the functor $\mathscr{O}_G \rightarrow Ab$ where for $f: G/H \rightarrow G/K \in Mor(\mathscr{O}_G)$, $f_*$ is multiplication by
$\frac{|K|}{|H|}$.

We will show that
\beg{CollapseTheoremStep}{C^G (X) \otimes_{\mathscr{O}_G} \underline{\Z /p}\cong \bigoplus_{(H)} \widetilde{C}^{cell}(X^H / \bigcup_{H\subsetneq K} X^K)\otimes_{\Z [W(H)]} \Z /p.}
In each degree separately, \rref{CollapseTheoremStep} holds as abelian groups, since all $\mathscr{O}_G$-identifications corresponding to non-isomorphisms are trivial.
For any $f:H\subsetneq K$, consider the summand of the differential $d^{tot}$ of $C^G (X) \otimes_{\mathscr{O}_G} \underline{\Z/p}$
$$d_{H,K}: C_n(X^H) \otimes \Z /p \rightarrow C_{n-1}(X^K) \otimes \Z /p.$$
By conjugation, it suffices to show that these maps are $0$.

The differential $d^{tot}$ is given by
$$\bigoplus d/ \sim: \bigoplus C_n(X^H) \otimes \Z/p /\sim \rightarrow \bigoplus C_{n-1} (X^H) \otimes \Z /p/\sim$$
where $\sim$ denotes the equivalence relation generated by 
$$f^* a\otimes b \sim a\otimes f_*b.$$
In particular, for $q\in C_n(X^H)$, let $c\in C_{n-1} (X^H)$ be the sum of the terms of $d (q)$ on cells in $X^K$, where $d$ is the differential of $C(X^H)$.
Then we have
$$d_{H,K}(q) = f^* c\otimes 1 = c\otimes f_* 1 = c\otimes \frac{|K|}{|H|} =0.$$
Therefore, $d_{H,K}=  0$.
Thus, we have proved \rref{CollapseTheoremStep}, and hence the spectral sequence collapses to $E^1$.

\end{proof}

%\begin{theorem}
%Let $G$ be a finite group. If $G$ is not $p$-group, then
%$$\Phi^G(\underline{A}) = 0.$$
%On the other hand, if $G$ is a $p$-group, then
%$$\Phi^G(\underline{A})=\Phi^{G^{ab}}(\underline{A}).$$
%\end{theorem}

\vspace{5mm}

\section{Geometric Fixed Points}\label{ProofsMainResult}

\vspace{5mm}

In this section, we shall apply the methods of the previous section to completely calculate the coefficients of the geometric fixed point spectrum
$$\Phi_*^G H\underline{A} = \widetilde{H\underline{A}}_* \widetilde{E\mathscr{F} [G]}$$
where $\mathscr{F}[G]= \{ H | H\subsetneq G\}$ for any finite group $G$.

A basic fact about posets is useful for computing examples:

For functors $F:\mathscr{C} \rightarrow \mathscr{D}$ between any categories, we get continuous maps
$$|F| : |\mathscr{C}| \rightarrow |\mathscr{D}|,$$
and for natural transformations $F \rightarrow G$, we get
$$|F| \simeq |G|.$$
In particular, if $f , g : P \rightarrow Q$ are morphisms of posets and $f\leq g$, then $|f| \simeq |g|$.
Thus, in particular, if $P$ has lowest or highest element $x$, $|Id| \simeq |Const_x|$. Therefore, then, $|P|$ is contractible.

\begin{lemma}\label{POSETLemma}
For posets $Q\subseteq P$, if there exists a morphism $f: P \rightarrow Q$ which satisfies both
\begin{enumerate}
\item For every $x \leq y\in P $, we have $f(x) \leq f(y)$

\item For every $x\in P$, we have $x\leq f(x)$ (or alternately $x\geq f(x)$),

\end{enumerate}
then the inclusion induces a homotopy equivalence $|Q|\simeq |P|$.
\end{lemma}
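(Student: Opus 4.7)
The plan is to apply the general principle stated just before the lemma: natural transformations between functors of posets (viewed as categories with a unique arrow $x\to y$ whenever $x\le y$) induce homotopies on nerves, so two order-preserving maps $F,G:P'\to Q'$ satisfying $F(x)\le G(x)$ for all $x$ give $|F|\simeq |G|$. I want to show that $|f|$ is a homotopy inverse to the map on nerves induced by the inclusion $i:Q\hookrightarrow P$.

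First I would verify the easy compositional identities. For $y\in Q\subseteq P$, condition (2) gives $y\le f(y)$, and since $Q\subseteq P$ carries the induced order and $f(y)\in Q$, this same inequality holds in $Q$. Therefore $\mathrm{id}_Q\le f\circ i$ as functors $Q\to Q$, so $|\mathrm{id}_Q|\simeq |f\circ i|$ on $|Q|$. Similarly, for $x\in P$ we have $x\le f(x)=i(f(x))$, giving $\mathrm{id}_P\le i\circ f$ as functors $P\to P$, hence $|\mathrm{id}_P|\simeq |i\circ f|$ on $|P|$. Combining the two, $|i|:|Q|\to |P|$ is a homotopy equivalence with homotopy inverse $|f|$.

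For the alternate hypothesis $x\ge f(x)$, exactly the same argument runs with the inequalities reversed: one obtains $\mathrm{id}_Q\ge f\circ i$ and $\mathrm{id}_P\ge i\circ f$, and the cited remark applies equally in this direction (a natural transformation $G\Rightarrow F$ still produces $|F|\simeq |G|$).

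There is no genuine obstacle here; the only point requiring a moment of care is that the comparison $y\le f(y)$ for $y\in Q$ needs to hold \emph{inside} $Q$, not merely inside $P$, in order to yield a natural transformation of functors landing in $Q$. This is automatic from the assumption that $Q\subseteq P$ is a sub-poset (equipped with the restricted order) together with $f(y)\in Q$. Once this is noted, the whole proof is a two-line application of the preceding paragraph of the text.
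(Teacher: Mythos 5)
Your argument is correct and is essentially identical to the paper's own proof: both verify $f\circ\iota$ and $\iota\circ f$ are comparable to the respective identities and invoke the fact that comparable poset maps induce homotopic maps on nerves, making $|f|$ a homotopy inverse to $|\iota|$. Your added remark that the inequality for $y\in Q$ must hold in the induced order on $Q$ is a fine point of care, but the route is the same.
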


\begin{proof}
Suppose we have $P, Q$  posets and a $f: P\rightarrow Q$ satisfying the assumptions.
We have an inclusion
$$\iota : Q \hookrightarrow P.$$
For an $x\in Q$, $\iota (x) = x$. So, for every $x\in Q$
$$f \circ\iota (x) = f(x) \geq x = Id_Q (x).$$
Therefore $f\circ\iota \geq Id_Q$. So $|f| |\iota| = |f\circ \iota| \simeq Id_{|Q|}$.
On the other hand, for $x\in P$, $f(x) \in Q$, so
$$\iota \circ f (x) = f(x) \geq x = Id_P (x).$$
So, $ f \circ \iota \geq Id_P$. So $|\iota | |f| = |\iota \circ f| \simeq Id_{|P|}$.

\end{proof}

Denote by $G_p'$ the Frattini subgroup of $G$, i.e. the subgroup generated by the commutator subgroup and $p$'th powers.

\begin{proposition}\label{abelianpgp}
Suppose $G$ is a $p$-group. Then for any $G$-spectrum $E$, we have
$$\Phi^G (E) \simeq \Phi^{G^{ab}/p}(E^{G_p'}).$$
%where $?'$ denotes the commutator group of $?$.
In particular, for a constant Mackey functor $\underline{A}$, we have
$$\Phi^G (\underline{A}) \simeq \Phi^{G^{ab} /p}(\underline{A}).$$
\end{proposition}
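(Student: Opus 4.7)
The plan is to reduce the computation of $\Phi^G(E)$ to a calculation on the Frattini quotient $\bar G := G/G_p'$, by exploiting that the universal space $E\mathscr{F}[G]$ only sees this quotient. Writing $\pi\colon G \to \bar G$ for the projection and $\pi^*$ for inflation of $\bar G$-actions to $G$-actions, the first task is to check that $\pi^* E\mathscr{F}[\bar G]$ is a model for $E\mathscr{F}[G]$. On fixed-point sets, for $H \leq G$ we have $(\pi^* E\mathscr{F}[\bar G])^H = (E\mathscr{F}[\bar G])^{\pi(H)}$, which is contractible when $\pi(H) \subsetneq \bar G$ and empty when $\pi(H) = \bar G$. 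The key group-theoretic input is the Frattini property for $p$-groups: $G_p'$ is the intersection of all maximal subgroups of $G$, so $H \cdot G_p' = G$ forces $H = G$. Consequently $H \subsetneq G$ if and only if $\pi(H) \subsetneq \bar G$, yielding $\widetilde{E\mathscr{F}[G]} \simeq \pi^* \widetilde{E\mathscr{F}[\bar G]}$ as $G$-spectra.

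Next, I would iterate the categorical fixed point functor through the normal inclusion $G_p' \trianglelefteq G$:
$$\Phi^G(E) = (E \wedge \widetilde{E\mathscr{F}[G]})^G \simeq \left((E \wedge \pi^* \widetilde{E\mathscr{F}[\bar G]})^{G_p'}\right)^{\bar G}.$$
Since $G_p'$ acts trivially on $\pi^* \widetilde{E\mathscr{F}[\bar G]}$ and the latter is inflated from $\bar G$, the standard projection formula for fixed points of a smash product with an inflated spectrum produces a $\bar G$-equivariant equivalence
$$(E \wedge \pi^* \widetilde{E\mathscr{F}[\bar G]})^{G_p'} \simeq E^{G_p'} \wedge \widetilde{E\mathscr{F}[\bar G]}.$$
Taking $\bar G$-fixed points and recognizing the result as the definition of geometric fixed points on $\bar G$ gives $\Phi^G(E) \simeq \Phi^{\bar G}(E^{G_p'})$, which is the first claim.

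For the consequence about constant Mackey functors, it suffices to identify $(H\underline{A}_G)^{G_p'}$ with $H\underline{A}_{\bar G}$ as $\bar G$-spectra. This is the standard fact that categorical fixed points of the Eilenberg--MacLane spectrum of a constant Mackey functor along a normal subgroup again yields the Eilenberg--MacLane spectrum of the same constant Mackey functor on the quotient, which can be checked on homotopy groups using Bredon's formula \rref{bredonformula}.

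The main obstacle is justifying the projection formula in the middle step. Although standard in the literature on genuine equivariant spectra, a careful proof reduces to the cellular case: $\pi^* \widetilde{E\mathscr{F}[\bar G]}$ can be built from cells $(G/K)_+ \wedge S^n$ with $K \supseteq G_p'$, for which $G_p'$ acts trivially and the identification $(E \wedge (G/K)_+)^{G_p'} \simeq E^{G_p'} \wedge (G/K)_+$ is direct, and one extends by colimits. The remaining ingredients, namely the Frattini arithmetic for $p$-groups and the behavior of constant Eilenberg--MacLane spectra under categorical fixed points, are standard.
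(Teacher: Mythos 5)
Your argument is correct, but it is not the route the paper's printed proof takes. The paper first proves a poset lemma ($|P_H^{\mathscr{F}[G]}|\simeq *$ whenever $G_p'\not\subseteq H$, via the Burnside basis theorem and Lemma \ref{POSETLemma}) and then deduces the Proposition by matching the $E^1$-terms of the spectral sequence of Proposition \ref{spectralsequencebased} for $\widetilde{E\mathscr{F}}[G]$ with those for $\widetilde{E\mathscr{F}}[G/G_p']$, using $E^H\cong (E^{G_p'})^{H/G_p'}$ together with the isomorphisms of Weyl groups and of the posets of subgroups containing $G_p'$; your first step --- that the inflation of $E\mathscr{F}[G/G_p']$ is a model for $E\mathscr{F}[G]$ because $H\cdot G_p'=G$ forces $H=G$ --- is exactly the simplification the paper only mentions in the remark following its lemma. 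What you add, and what the paper leaves implicit, is the passage from that space-level equivalence to the statement about $E^{G_p'}$: the projection formula $(E\wedge\pi^*Y)^{G_p'}\simeq E^{G_p'}\wedge Y$ for a $G/G_p'$-CW complex $Y$, valid precisely because every cell of $\pi^*Y$ has isotropy containing $G_p'$ (your cellwise duality argument, extended using exactness of categorical fixed points and their commutation with filtered homotopy colimits, is the standard justification), together with the identification $(H\underline{A}_G)^{G_p'}\simeq H\underline{A}_{G/G_p'}$. On that last point, Bredon's formula \rref{bredonformula} is not quite the right tool: one should compute the homotopy Mackey functor of the fixed-point spectrum, $\pi_k^{H/G_p'}\bigl((H\underline{A})^{G_p'}\bigr)=\pi_k^H(H\underline{A})$, observe it is concentrated in degree $0$ with identity restrictions and index transfers, hence again the constant Mackey functor on the quotient; this is standard but should be stated correctly. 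Your route buys a direct argument free of the spectral sequence and of the functoriality caveat the paper must flag parenthetically, while the paper's route exercises the poset lemma and spectral sequence machinery it reuses elsewhere.
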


\noindent {\bf Comment:}
Note that one always has an equivalence 
$$\Phi^G (E) \simeq \Phi^{G/H} \Phi^H E.$$
The special feature here is that $E^H \rightarrow \Phi^H E$ induces an equivalence on $G/H$-geometric
fixed points if $G$ is a $p$-group and $H$ is the Frattini subgroup.

\vspace{5mm}

We shall first prove

\begin{lemma} 
Let $G$ be a $p$-group and let $H\subseteq G$ be a subgroup not containing $G'_p$. Then
$$|P_H^{\mathscr{F}[G]}| \simeq *.$$
\end{lemma}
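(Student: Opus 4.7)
The plan is to apply Lemma \ref{POSETLemma} to collapse $P := P_H^{\mathscr{F}[G]}$ down to a subposet with a least element. Consider the map
$$f \colon P \longrightarrow P, \qquad f(K) = K \cdot G'_p,$$
sending a proper subgroup properly containing $H$ to the subgroup it generates together with the Frattini subgroup. This map is visibly order-preserving and satisfies $K \subseteq f(K)$.

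The first thing to check is that $f$ actually lands in $P$, i.e., that $f(K)$ is a proper subgroup of $G$ strictly containing $H$. Strict containment of $H$ is automatic since $f(K) \supseteq K \supsetneq H$. For properness in $G$, I would invoke the standard fact that in a $p$-group $G$, the Frattini subgroup $G'_p$ is the intersection of all maximal subgroups: any maximal $M \supseteq K$ then also contains $G'_p$, hence $f(K) \subseteq M \subsetneq G$. Setting $Q = \{K \in P \mid G'_p \subseteq K\}$, the map $f$ factors through $Q$, and Lemma \ref{POSETLemma} applied to $f\colon P \to Q$ yields $|P| \simeq |Q|$.

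It remains to show $|Q| \simeq *$, which follows from producing a least element of $Q$. The obvious candidate is $HG'_p$. Any $K \in Q$ satisfies both $H \subseteq K$ and $G'_p \subseteq K$, hence $HG'_p \subseteq K$, so $HG'_p$ is a minimum provided it itself lies in $Q$. Strict containment of $H$ is immediate from the hypothesis $G'_p \not\subseteq H$. For properness in $G$, suppose $HG'_p = G$; then the image of $H$ in $G/G'_p$ is all of $G/G'_p$, and the Burnside basis theorem for finite $p$-groups forces $H = G$, contradicting $G'_p \not\subseteq H$.

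The main (indeed only) genuinely group-theoretic obstacle is precisely this last step, ensuring $HG'_p \neq G$. This is where the $p$-group hypothesis enters essentially, through the Burnside basis theorem; everything else is formal manipulation of the poset lemma together with the characterization of $G'_p$ as the intersection of maximal subgroups of $G$.
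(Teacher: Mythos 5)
Your proof is correct and follows essentially the same route as the paper: both apply Lemma \ref{POSETLemma} to the map $K\mapsto K\cdot G'_p$, reduce to the subposet of proper subgroups containing $H\cdot G'_p$, and conclude by noting that this subposet has a least element, with the key group-theoretic input (properness of $K\cdot G'_p$ in $G$) coming from the Frattini/Burnside basis theorem in both cases. Your extra verification that $HG'_p\subsetneq G$ is a point the paper leaves implicit, but it is the same argument.
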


\begin{proof}
Denote by $Q$ the poset of proper subgroups of $G$ containing $G_p' \cdot H$. 
We have an inclusion $Q \subseteq P_H^{\mathscr{F}[G]}$.
By the Burnside basis theorem, for any subgroup $K\subsetneq G$, we have $K\cdot G_p' \subsetneq G$.
Thus, we have a map of posets $\varphi: P_H^{\mathscr{F}[G]} \rightarrow Q$ given by
$$K\mapsto K\cdot G_p'.$$
Also, for $K\in Q$, $\varphi (K) \supseteq K$.
Thus, $|Q|\simeq |P_H^{\mathscr{F}[G]}|$ by Lemma \ref{POSETLemma}.

However, $|Q|\simeq *$ since $Q$ has a minimal element.

\end{proof}

Note that this implies Proposition \ref{abelianpgp}, since the quotient map
\beg{QuotientMapforAbelianProp}{\widetilde{E\mathscr{F}}[G]\rightarrow \widetilde{E\mathscr{F}}[G/G_p']}
induces an isomorphism on $E^1$-terms of the spectral sequence \rref{spectralsequencebased}.
(The spectral sequence is not functorial in general with respect to change of groups.
In the present case, however, we have a surjection of groups which preserves
the height of the proper subgroup containing $G_p'$, while the remaining terms in the source are $0$.
Thus, a morphism of spectral sequences which is an isomorphism an $E^1$ arises.)

\vspace{3mm}

Also note that for the present purpose, the spectral sequence can be skipped entirely and one can simply argue that
\rref{QuotientMapforAbelianProp} is an equivalence by examining its $H$-fixed points for each $H$:
The fixed point set of the left hand side is contractible for $H\subsetneq G$ and equivalent to $S^0$ if $H=G$, while
the right hand side is contractible if $H\cdot G_p' \subsetneq G$ and equivalent to $S^0$ if $H\cdot G_p' = G$.
By the Burnside basis theorem, both conditions are equivalent.
This was pointed out to me during the process of revising this paper.

\begin{proposition}\label{nonabelianpgp}
If $G$ is not a $p$-group, then
$$\Phi^G (\underline{A}) = 0.$$
\end{proposition}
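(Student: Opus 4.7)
The plan is to reduce the vanishing of $\Phi^G H\underline{A}$ for an arbitrary abelian group $A$ to the case $A = \Z$, then to compute $\pi_0$ of $\Phi^G H\underline{\Z}$ via the cofiber sequence defining $\widetilde{E\mathscr{F}[G]}$, and finally to upgrade this vanishing of $\pi_0$ to a vanishing of the whole spectrum using the ring spectrum structure.

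For the reduction to $A = \Z$, note that the constant Mackey functor $\underline{A}$ is naturally a module over $\underline{\Z}$, so $H\underline{A}$ is a module over the commutative ring $G$-spectrum $H\underline{\Z}$. Since the geometric fixed-point functor $\Phi^G$ is symmetric monoidal, $\Phi^G H\underline{A}$ is a module over the commutative ring spectrum $\Phi^G H\underline{\Z}$, so it suffices to prove that this latter spectrum is contractible. Any ring spectrum with vanishing $\pi_0$ is contractible: then $1 = 0$, so the identity map (which equals multiplication by $1$) is null. Our task therefore reduces to showing $\pi_0 \Phi^G H\underline{\Z} = 0$.

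To compute $\pi_0$, smash the cofiber sequence
$$E\mathscr{F}[G]_+ \to S^0 \to \widetilde{E\mathscr{F}[G]}$$
with $H\underline{\Z}$ and take the associated long exact sequence of $G$-equivariant homotopy groups. Since $\widetilde{H\underline{\Z}}^G_{-1}(E\mathscr{F}[G]_+) = 0$, this identifies $\pi_0 \Phi^G H\underline{\Z}$ with the cokernel of the augmentation $\widetilde{H\underline{\Z}}^G_0(E\mathscr{F}[G]_+) \to \Z$. For each proper subgroup $H \subseteq G$, the inclusion of a $0$-cell of orbit type $G/H$ factors this augmentation as
$$\Z = \widetilde{H\underline{\Z}}^G_0((G/H)_+) \to \widetilde{H\underline{\Z}}^G_0(E\mathscr{F}[G]_+) \to \Z,$$
whose composite is the transfer on $\underline{\Z}$, namely multiplication by $[G:H]$. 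Thus the image of the augmentation contains $[G:H]$ for every $H \in \mathscr{F}[G]$.

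Now assume $G$ is not a $p$-group, and let $p_1, \ldots, p_k$ with $k \geq 2$ be the distinct primes dividing $|G|$, with $P_i \subsetneq G$ a Sylow $p_i$-subgroup. Then $p_i \nmid [G:P_i]$ while $p_j \mid [G:P_i]$ for each $j \neq i$, so no prime divides every $[G:P_i]$, giving $\gcd_i [G:P_i] = 1$. By Bezout, $1$ lies in the image of the augmentation, so the cokernel is zero and $\pi_0 \Phi^G H\underline{\Z} = 0$, completing the proof. The main subtlety is the identification of the composite $\Z \to \Z$ with multiplication by $[G:H]$; this rests on the Mackey-functor description of $\underline{\Z}$ in which restriction maps are identities and transfer maps are multiplication by the subgroup index.
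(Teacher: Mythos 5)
Your proof is correct, and its arithmetic core is the same as the paper's: the indices of proper subgroups containing Sylow subgroups of $G$ have greatest common divisor $1$, this kills the unit in degree $0$, and the ring structure coming from $S^{\infty V}\wedge S^{\infty V}\cong S^{\infty V}$ then forces everything to vanish. The difference is in the packaging. The paper runs its isotropy spectral sequence for $\widetilde{E\mathscr{F}[G]}$: for each prime $p$ it chooses a \emph{maximal} proper subgroup $H$ containing a $p$-Sylow subgroup, so that $H$ has height $1$ and contributes a class $\eta\in E^1_{1,0}$ with $d^1(\eta)=\pm|G|/|H|$, and these differentials jointly kill $1\in E^1_{0,0}$. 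You bypass the spectral sequence entirely: the cofiber sequence $E\mathscr{F}[G]_+\to S^0\to\widetilde{E\mathscr{F}[G]}$, the vanishing of Bredon homology of a $G$-CW complex in negative degrees, and the identification of the composite $(G/H)_+\to E\mathscr{F}[G]_+\to S^0$ in $H\underline{\Z}$-homology with the transfer, i.e.\ multiplication by $[G:H]$ for the constant Mackey functor (the same fact the paper encodes in its co-coefficient system with $f_*$ given by $|K|/|H|$), show directly that $\pi_0\Phi^G H\underline{\Z}$ is the cokernel of the augmentation, hence zero. This is a more elementary route, and it lets you use the Sylow subgroups themselves rather than maximal subgroups containing them. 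You also make explicit the reduction from general $A$ to $\Z$ via the $H\underline{\Z}$-module structure and monoidality of $\Phi^G$, a step the paper leaves implicit (its proof only treats $\underline{\Z}$ although the proposition is stated for all $A$), which is a genuine gain in completeness.
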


%\vspace{5mm}

%\section{The Case of Constant Mackey Functors}\label{ConstantMackey}

\vspace{5mm}

\begin{proof}

First, suppose $G$ is a finite group that is not a $p$-group.

The spectrum $\Phi^G (\underline{\Z})$ is a commutative ring spectrum, since we have 
$$S^{\infty V} \wedge S^{\infty V}  \cong S^{\infty V}.$$
Choose a prime $p$. Then by the first Sylow theorem, there exists a $p$-Sylow subgroup $P$ of $G$.
Then there exists a $H$ with $P\subseteq H \subsetneq G$ that is maximal (i.e. there does not exist $K$ such that
$P\subseteq H \subsetneq K \subsetneq G$).
Therefore the contribution of $H$ to the spectral sequence will include
\beg{CstMackeyFunctorThmHContribution}{\widetilde{H}_0^{W(H)}(\widetilde{\emptyset})\simeq H_0^{W(H)}(*) = \Z.}
In \rref{CstMackeyFunctorThmHContribution}, $1\in \Z$ represents an element $\eta \in E^1_{1,0}$ where
$$d^1(\eta) = \pm \frac{|G|}{|H|} \in \Z = E^1_{0,0}.$$
Since we have $p \nmid \frac{|G|}{|H|}$, the g.c.d. of all these numbers is $1$, and thus, 
$1\in \Z = E^1_{0,0}$ of the spectral sequence \rref{Specialspectralsequencebased} is killed. Since $\Phi^G_* (\underline{\Z})$ is a commutative ring, it must be $0$.
So
$$\Phi^G(\underline{\Z}) =0.$$
\end{proof}

Now, one can apply the results of my previous paper \cite{SophieKrizEquivariant}, as well as those of  \cite{HollerIgorKrizCoefficients, HollerIgorKrizOrdinary}, to calculate $\Phi^{(\Z/ p )^n} (\underline{\Z})$. 

Recall that for any space or spectrum $X$, we can obtain maps
\beg{Bockstein1}{
\diagram
H^n(X; \Z/p) \rto^\beta & H^{n+1}(X;\Z/p)\\
\enddiagram
}
\beg{Bockstein2}{
\diagram
H^n (X;\Z/p) \rto^\beta &H^{n+1}(X;\Z) \\
\enddiagram
}
as the connecting maps of the long exact sequences from taking cohomology with coefficients in the following respective short exact sequences:
$$0\rightarrow \Z/p \rightarrow \Z / (p^2) \rightarrow \Z/p \rightarrow 0$$
$$
\diagram
0\rto &\Z \rto^{\cdot p} & \Z \rto & \Z/p \rto & 0. \\
\enddiagram
$$
These are called the Bockstein maps, and the long exact sequence involving \rref{Bockstein2}
forms an exact couple which gives rise to the Bockstein spectral sequence, in which \rref{Bockstein1} is $d^1$.

Consider first the case of $p>2$. Recalling the notation of Theorems \ref{Prepretheorem} and \ref{Pretheorem}, the Bockstein acts by
$$\beta (dz_i) = z_i$$
$$\beta z_i = 0.$$
Also recall that we have
\beg{BMultiplication}{\beta(ab) = \beta a \cdot b + (-1)^{|a|} a \cdot \beta b.}

Thus we get
$$ \beta( t_\alpha ) = 0$$
$$ \beta (u_\alpha) = 1$$
(Note that
$\beta$ preserves the relations of \rref{oldresult}. For example,
$$\beta(-u_\beta u_{\alpha + \beta} + u_{\alpha} u_{\alpha + \beta} - u_{\alpha} u_{\beta})=$$
$$= - \beta(u_\beta) u_{\alpha + \beta} + u_\beta \beta(u_{\alpha + \beta}) + \beta(u_\alpha) u_{\alpha + \beta}$$
$$- u_\alpha \beta(u_{\alpha + \beta}) - \beta(u_{\alpha} ) u_\beta + u_\alpha \beta(u_\beta) = 0.)$$

Now by computing directly on the chain level the
equivariant $(\Z /p)^n$-homology of $S^{\infty V}$ where $V$ is the reduced regular representation,
the standard $(\Z /p)^n$-CW decomposition of $S^{\infty V}$ (thought of as a colimit of smash products of representation spheres of
non-trivial $1$-dimensional complex representations $\alpha$) has only one $0$-cell beside the base point, to which (using one $\alpha$)
$p$ $1$-cells are attached on which $(\Z /p)^n$ acts transitively.
Thus, $\Phi_0^{(\Z /p)^n} (\underline{\Z}) = \Z /p$.
Since $\Phi_*^{(\Z/p)^n} (\underline{\Z})$ is a ring, it has characteristic $p$ (i.e. every element is annihilated by $p$).
Thus the Bockstein spectral sequence collapses to $E^2$, or in other words
\beg{BSSCollapses}{H_* (\Phi_*^{(\Z/ p )^n} (\underline{\Z/p}), \beta) =0.}
Hence, we have an exact sequence
\beg{ExactSequence}{
\diagram
0 \rto & \Phi_*^{(\Z/p)^n} (\underline{\Z}) \rto & \Phi_*^{(\Z/ p )^n} (\underline{\Z/p}) \rto^\beta & \Phi_*^{(\Z/ p )^n} (\underline{\Z/p}).\\
\enddiagram
}

Therefore, $\Phi_*^{(\Z/ p )^n} (\underline{\Z})$ contains the elements $t_\alpha$ and $\sum_i a_i u_{\alpha_i}$ where $\sum_i a_i = 0 \in \Z/p$.
Choosing an $\alpha_0 \in (\Z / p )^n \smallsetminus \{0 \}$, since we are in characteristic $p$, the elements $\sum_i a_i u_{\alpha_i}$ where $\sum_i a_i = 0 \in \Z/p$ are linear combinations of $\widetilde{u}_\alpha = u_\alpha - u_{\alpha_0}$.
One easily verifies the relations \rref{relationspg2}. For example,
$$-\widetilde{u}_\beta \widetilde{u}_{\alpha+\beta} + \widetilde{u}_\alpha \widetilde{u}_{\alpha +\beta} - \widetilde{u}_\alpha \widetilde{u}_\beta=$$
$$=-(u_\beta - u_{\alpha_0}) (u_{\alpha + \beta}-u_{\alpha_0}) + (u_\alpha -u_{\alpha_0}) (u_{\alpha + \beta}-u_{\alpha_0})$$ 
$$-(u_\alpha -u_{\alpha_0})(u_\beta -u_{\alpha_0}) = - u_\beta u_{\alpha + \beta} +u_{\alpha_0} u_{\alpha + \beta} + u_\beta u_{\alpha_0}- u^2_{\alpha_0} + u_\alpha u_{\alpha + \beta}$$
$$- u_{\alpha_0}u_{\alpha + \beta}-u_\alpha u_{\alpha_0}+ u_{\alpha_0}^2 $$
$$- u_\alpha u_\beta + u_{\alpha_0} u_\beta + u_\alpha u_{\alpha_0} -u_{\alpha_0}^2= 0$$
Let $\widetilde{R}_n$ denote the ring $\Z / p [t_\alpha, \widetilde{u}_\alpha]$ modulo the relations \rref{relationspg2}.
Write \rref{ExactSequence} as
$$ 
\diagram
0\rto & R_{\Z} \rto & R_{\Z /p } \rto^\beta & R_{\Z / p} \\
\enddiagram
$$
We therefore have a homomorphism of rings 
$$\varphi: \widetilde{R}_n \rightarrow R_\Z.$$
We want to prove that this is an isomorphism.

\vspace{3mm}

Now, let us consider $p=2$.
Choose again a representative $\alpha_0 \in (\Z/2)^n \smallsetminus \{0\}$.
Again, the $R_\Z$ contains elements of the form $\sum_i a_i y_i$ with $\sum_i a_i = 0$ which are generated
by $\widetilde{y}_\alpha = y_\alpha - y_{\alpha_0}$.
Also, the elements $t_\alpha = y_\alpha^2 \in R_\Z$ (by \rref{BMultiplication}), but note that
$\widetilde{y}_\alpha^2 = y_\alpha^2 + y_{\alpha_0}^2$ (since we are in characteristic $2$), so we only need to include $t_{\alpha_0}$ in the generators.
Now similarly as for $p>2$, one proves the relations.
\beg{RelationsPeq2}{\begin{array}{c}
\widetilde{y}_{\alpha_0} = 0\\
\widetilde{y}_{\alpha} \widetilde{y}_\beta + \widetilde{y}_\alpha \widetilde{y}_{\alpha + \beta} +\widetilde{y}_\beta \widetilde{y}_{\alpha + \beta} + t_{\alpha_0} =0.\\
\end{array}
}
Let
$\widetilde{R}_n$ denote the quotient of the ring
$\Z / 2 [\widetilde{y}_\alpha, t_{\alpha_0}] $ 
by relations \rref{RelationsPeq2}.

Again, we have a homomorphism of rings
$$\varphi: \widetilde{R}_n \rightarrow R_{\Z}$$
$$\widetilde{y}_\alpha \mapsto y_\alpha - y_{\alpha_0},$$
$$t_{\alpha_0} \mapsto y_{\alpha_0}^2.$$
We can prove that $\varphi$ is an isomorphism by calculating the Poincar\'{e} series of $\widetilde{R}_n$, checking
that it is the same as the Poincar\'{e} series of $R_\Z$ and exhibiting an additive basis of $\widetilde{R}_n$ that is linearly independent in $R_{\Z/2}$.

Recall from \cite{ HollerIgorKrizCoefficients, HollerIgorKrizOrdinary} that the Poincar\'{e} series of $R_{\Z/p}$ is
$$P (R_{\Z/p}) = \frac{1}{ (1-x)^{n}} \prod_{i=1}^n (1+ (p^{i-1} -1)x).$$
Thus, by \rref{BSSCollapses},
we have an exact sequence of graded $\Z /p$-vector spaces
$$
\diagram
0 \rto & R_{\Z} \rto & R_{\Z / p} \rto^{\beta} & R_{\Z /p}[1] \rto^{\beta} & R_{\Z} [2] \rto & 0\\
\enddiagram
$$
and hence
\beg{PoincareSeriesZVSZp}{P(R_\Z) = \frac{1}{1+x} P (R_{\Z/p}) = \frac{1}{(1-x^2) (1-x)^{n-1}} \prod_{i=1}^n (1+ (p^{i-1} -1)x).}
For $p=2$, we know that the Poincar\'{e} series of $\widetilde{R}_1$ is $\frac{1}{1-x^2}$.
Now we can treat the $\alpha$ as elements of $(\Z /2)^n\smallsetminus \{ 0 \}$ and assume that $\alpha_0 = (1, 0, \dots, 0)$.
For $n=2$, we only have $\widetilde{y}_{(0,1)}$, $\widetilde{y}_{(1,1)}$, $\widetilde{y}_{(1,0)}= 0 $.
By the relations, we have $\widetilde{y}_{(0,1)} \widetilde{y}_{(1,1)} = t_{\alpha_0}$.
Therefore this ring has additive basis $\widetilde{y}_{(0,1)}^{m\geq 1} t_{\alpha_0}^{m'}$, $\widetilde{y}_{(1,1)}^{k\geq 1} t_{\alpha_0}^{k'}$, and
$t_{\alpha_0}^\ell$ ($m', k',\ell\geq 0$), which give the terms $\frac{x}{(1-x)(1-x^2)}$ twice and $\frac{1}{1-x^2}$ in the Poincar\'{e} series.
Therefore, the Poincar\'{e} series of the ring is 
$$P (\widetilde{R}_2) = \frac{1}{1-x^2} + 2\cdot \frac{x}{(1-x)(1-x^2)} = \frac{1+x}{(1-x)(1-x^2)}.$$
After this, we can continue by induction since the relations imply
$$P(\widetilde{R}_n) = P (\widetilde{R}_{n-1}) \cdot (1 + (2^{n-1}-1 )x) \cdot \frac{1}{1-x}$$
similarly as in \cite{HollerIgorKrizCoefficients}:
The additive basis is formed by the additive basis of $\widetilde{R}_{n-1}$ times $\widetilde{y}_{(0,\dots, 0,1)}^{\geq 0}$ or
$\widetilde{y}^{\geq 1}_{(\alpha', 1)}$ where $\alpha'\in (\Z/2)^{n-1} \smallsetminus \{ 0\}$.
These elements are linearly independent in $R_{\Z /2}$ by performing a similar induction there (which was done in \cite{HollerIgorKrizCoefficients}).

\vspace{5mm}

The case of $p>2$ is completely analogous. We define the homomorphism of rings
$$\varphi: \widetilde{R}_n \rightarrow R_{\Z}$$
$$\widetilde{u}_{\alpha} \mapsto u_{\alpha} - u_{\alpha_0}$$
$$t_{\alpha} \mapsto t_{\alpha},$$
which again we can check is a ring homomorphism by computing the relations in the target. 
Again, we check $\varphi$ is an isomorphism by checking that for every n, the Poincar\'{e} series of $\widetilde{R}_n$ agrees with \rref{PoincareSeriesZVSZp}.
We have $\widetilde{R}_1$ is generated by the basis of $(t_{(1)})^{m\geq 0}$. Thus, as before, $P(\widetilde{R}_1) = \frac{1}{1-x^2}$.
This time, for every $n\geq 2$, an additive basis of $\widetilde{R}_n$ is given by the additive basis of $\widetilde{R}_{n-1}$ times
$t_{(0,\dots,0,1)}^{\geq 0} \cdot \widetilde{u}_{(0,\dots,0,1)}^{\epsilon}$ where $\epsilon \in \{0,1\}$, or times $t_{(\alpha',1)}^{\geq 1}$, or times $t_{(\alpha',1)}^{\geq 0} \cdot \widetilde{u}_{(\alpha',1)}$ where $\alpha' \in (\Z / p)^{n-1} \smallsetminus \{ 0\}$. 
This gives
$$P(\widetilde{R}_n) = P (\widetilde{R}_{n-1}) \cdot (1 + (p^{n-1}-1 )x) \cdot \frac{1}{1-x},$$
and we can proceed by induction.

\vspace{5mm}

\section{Another Example}\label{ExtraSpecialExample}

\vspace{5mm}

For the rest of the paper, we will consider equivariant homology with constant coefficients $\underline{\Z /p}$ for a prime $p$.
If $G=(\Z/p)^n$ is an elementary abelian group, $S$ is a set of $1$-dimensional representations (real or complex
depending on whether $p=2$ or $p>2$), and $\gamma = \bigoplus_{\alpha \in S} \alpha$, then we completely calculated in \cite{SophieKrizEquivariant} the ($\Z$-graded) coefficients of
\beg{Calculatethecoefficientsof}{\widetilde{H\underline{\Z / p}}^G_{\;*} (S^{\infty\gamma}) = \widetilde{H\underline{\Z/p}}_{\;*}^G \widetilde{E\mathscr{F}_{\gamma}}.}
By the above method, for any $p$-group $G$ and any set $S$ of non-trivial irreducible $1$-dimensional representations of $G/ G_p'$,
$\gamma = \bigoplus_{\alpha \in S} \alpha$, we have
$$\widetilde{H\underline{\Z / p}}^G_{\;*} (S^{\infty\gamma}) = \widetilde{H\underline{\Z/p}}_{\;*}^{G/G'_p} (S^{\infty \gamma}).$$

However, for a $G$-representation $\gamma$ which does not factor through $G/G_p'$ the calculation of
\rref{Calculatethecoefficientsof}
can be non-trivial.
In this section, as an example, we consider the case where $G$ is the split extraspecial group (as described below) at $p=2$ and $V$ is the
irreducible real representation non-trivial on the center.
This group is the central product of $n$ copies of $D_8$.
We write $V_n = (\Z/2 \oplus \Z/2)^n$
where the generators of the $i$th copy of $\Z/2 \oplus \Z/2$ are denoted by $v_{1,i}, v_{2,i}$.
Define $q(v_{1,i}) = q(v_{2,i}) =0$, $q(v_{1,i} + v_{2,i}) =1$, and let $q$ be additive between different $i$ summands.
This is a split quadratic form on the $\mathbb{F}_2$-vector space $V_n$ with associated symplectic form
$$b(x,y) = q(x+y) +q(x)+q(y).$$
A vector subspace $W\subseteq V$ is called {\em isotropic} when $b$ is $0$ on $W$ and is called {\em $q$-isotropic} if $q$ is $0$ on $W$.
The split extraspecial group $\widetilde{V}_n$ is an extension
$$1\rightarrow \Z /2 \rightarrow \widetilde{V}_n \rightarrow V_n \rightarrow 1$$
where for $v\in V_n$, $2v \neq 0$ if and only if $q(v) \neq 0$ and for $v, w \in V_n$, $vwv^{-1}w^{-1} = b(v,w)$.

Clearly $\widetilde{V}_n$ is isomorphic to a central product of $n$ copies of $D_8$ and the (real) irreducible representation $\gamma$
non-trivial on the center is obtained as the tensor product of the dihedral representation of the $n$ copies of $D_8$.
We shall apply Proposition \ref{spectralsequence} to
$$\widetilde{H\underline{\Z /2}}_{\;*}^{\widetilde{V}_n} (S^{\infty \gamma}).$$
The family $\mathscr{F}_{\gamma}$ consists of elementary abelian subgroups of $\widetilde{V}_n$ disjoint with the center which project to
$q$-isotropic subspaces of $V_n$.
Two subgroups are conjugate if and only if they project to the same $q$-isotropic subspaces $U$ of $V_n$.
We shall refer to these subgroups as {\em decorations} of $U$, and call them {\em decorated} $q$-isotropic subspaces.

\vspace{5mm}

We shall need to consider the following modular representations of the split extraspecial $2$-group $\widetilde{V}_n$:
All these representations will factor through the Frattini quotient $V_n$.
By the representation $\underline{2}_i$, we mean a tensor product of the regular representation of $\Z /2 \{ v_{1,i}\}$
with the trivial representation on $\Z /2 \{ v_{2, i}\}$, where $v_{1,j}, v_{2,j}$ act trivially for $j\neq i$. (For counting purposes, equivalently, $1$ and $2$ can be reversed).
The representation $\underline{3}_i$ is the kernel of the augmentation from the regular representation on $\Z/2\{ v_{1,i}, v_{2,i}\}$
to the  trivial representation (with the other coordinates also acting trivially).

Let $\underline{P}_n$ be the poset of elementary abelian subgroups of $\widetilde{V}_n$ which project to a non-trivial $q$-isotropic
subspace of $V_n$.
We will refer to these subgroups as {\em decorated $q$-isotropic subspaces} of $V_n$.

\vspace{5mm}

\begin{theorem}
For $n>1$, $\widetilde{H}_k (|\bar{P_n}|) =0$ except for $k=n-1$.
As a $\widetilde{V}_n$-representation, $\mathscr{H}_n := \widetilde{H}_{n-1} (|\overline{P}_n|)$
is given recursively as follows:
$$\mathscr{H}_1 = \underline{3}_1$$
\beg{RecursiveForEqHomOfPoset}{
\begin{array}{c}
\mathscr{H}_{n+1} = \underline{3}_{n+1} \otimes \mathscr{H}_n \\ [1ex]
 \oplus \underline{2}_{n+1} \otimes (2^{2n-1} - 2^{n-1}) \mathscr{H}_n\\ [1ex]
\oplus \underline{2}_{n+1} \otimes (2^{2n-1} + 2^{n-1} -1)(2\mathscr{H}_n - \underline{2}_n 2^{2n-2}\mathscr{H}_{n-1}).
\end{array}}
The subtraction in the last term is to be interpreted recursively as follows:
We set $\mathscr{H}_0 =1$.
Then one copy of $\underline{2}_n\mathscr{H}_{n-1}$ is ``subtracted" from the first summand of \rref{RecursiveForEqHomOfPoset} with $n$ replaced $n-1$,
to leave a copy of $\mathscr{H}_{n-1}$.
The remaining $2^{2n-2}-1$ copies of $\underline{2}_n \mathscr{H}_{n-1}$ are subtracted from the second summand
of the formula \rref{RecursiveForEqHomOfPoset} with $n$ replaced by $n-1$ (thus, only one copy of the $2\mathscr{H}_n$ is involved in the subtraction).
\end{theorem}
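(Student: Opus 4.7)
The plan is to argue by induction on $n$. The base case $n=1$ follows by direct inspection: $\bar{P}_1$ consists of the four non-central involutions of $D_8$ (all maximal), so $|\bar{P}_1|$ is four discrete points, and checking the $\widetilde{V}_1$-conjugation action identifies $\widetilde{H}_0(|\bar{P}_1|)$ with the augmentation ideal $\underline{3}_1$. The concentration of homology in the top degree $n-1$ for general $n$ would follow from a Cohen--Macaulay style argument for the poset of decorated $q$-isotropic subspaces: the subposet strictly above a decorated $q$-isotropic $\tilde{U}$ is itself (the nerve of) the analogous decorated isotropic building for the quadratic form on $\tilde{U}^{\perp}/\tilde{U}$, of smaller rank, to which the inductive hypothesis applies; a Mayer--Vietoris comparison along the filtration by dimension of the underlying $q$-isotropic subspace then forces all homology into a single degree.

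For the representation-theoretic step, I would stratify $\bar{P}_{n+1}$ according to the image of each decorated $q$-isotropic subgroup under projection onto the last extraspecial factor. Three cases appear: (i) decorations contained entirely in the first $n$ factors, which contribute $\underline{3}_{n+1} \otimes \mathscr{H}_n$, the $\underline{3}_{n+1}$ recording the three ways a decoration can be augmented by the central part of the last factor; (ii) decorations whose projection is an anisotropic vector in $V_1^{(n+1)}$, contributing $\underline{2}_{n+1} \otimes (2^{2n-1} - 2^{n-1})\mathscr{H}_n$, with the coefficient equal to the number of such vectors; and (iii) decorations whose projection is a non-zero isotropic vector, giving the third summand with coefficient $2^{2n-1} + 2^{n-1} - 1$, the number of non-zero isotropic vectors. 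In each stratum I would identify the subposet above a prescribed projection type with a copy of $\bar{P}_n$ (or $\bar{P}_{n-1}$ in degenerate sub-strata) via Lemma~\ref{POSETLemma}, so that its reduced top homology transports into the recursion through the $\widetilde{V}_{n+1}$-action on the decorations over the last factor.

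The principal obstacle is case (iii), which is the origin of the subtraction in the third summand. A decorated subspace projecting to an isotropic line in $V_1^{(n+1)}$ admits two distinct decorations of the extended line on the last factor (accounting for $2\mathscr{H}_n$), but those whose first-$n$ component actually factors through a subspace of the previous generation are already accounted for in strata (i) and (ii), and must be removed; the correction $\underline{2}_n \cdot 2^{2n-2}\mathscr{H}_{n-1}$ measures this overlap, the factor $2^{2n-2}$ being the number of decorations of a fixed isotropic vector at the $(n-1)$-st step and the $\underline{2}_n$-twist recording the ambiguity in the decoration of the distinguished generator. Propagating this inclusion-exclusion coherently through the stratification spectral sequence, in a way compatible with the conjugation action of $\widetilde{V}_{n+1}$ and the action of the last $D_8$ factor on the decorations, is the main technical challenge; the ``recursive subtraction'' convention described after \rref{RecursiveForEqHomOfPoset} is precisely the bookkeeping that ensures each cancellation is realized honestly at the level of representations rather than only virtually.
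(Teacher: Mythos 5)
Your outline has the right general flavor (induction on $n$, identifying links with smaller decorated posets via Lemma \ref{POSETLemma}), but the step you defer as ``the main technical challenge'' is precisely the substance of the paper's proof, and your substitute arguments for the two main claims do not work as stated. For the vanishing outside degree $n-1$, the Cohen--Macaulay sketch is circular: knowing that upper links (and lower intervals) are spherical does not by itself force the whole poset to be spherical --- sphericity of $\bar{P}_{n+1}$ itself is part of the CM condition and is exactly what must be proved. The paper gets it, simultaneously with the recursion, from a concrete homological input: one first replaces the subposet of decorated subspaces meeting $V_n$ nontrivially by $\bar{P}_n$ (via $W\mapsto W\cap V_n$ and Lemma \ref{POSETLemma}), then attaches the $2$-dimensional and the $1$-dimensional decorated subspaces $W$ with $W\cap V_n=0$, producing the cofibration sequences \rref{CofibrESGroupsPoset} and \rref{CofibrESGroupsPosetNextStep}, combines them into \rref{FinalCofibrESGroupsPoset}, and proves that the second map there is \emph{onto} in reduced homology. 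That surjectivity is what kills $\widetilde{H}_{n-1}(|\bar{P}_{n+1}|)$ and exhibits $\mathscr{H}_{n+1}$ as an honest kernel, which is exactly the representation-level meaning of the ``recursive subtraction''; nothing in your proposal supplies this step.

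Moreover, your stratification of $\bar{P}_{n+1}$ by the image of the projection to the last $\Z/2\oplus\Z/2$ factor cannot reproduce the stated multiplicities, which signals that it is the wrong decomposition. The coefficients $2^{2n-1}-2^{n-1}$ and $2^{2n-1}+2^{n-1}-1$ are counts of anisotropic, respectively nonzero $q$-isotropic, vectors $w$ in $V_n$ (the first $n$ factors): they index the lines $\langle w+v_{1,n+1}+v_{2,n+1}\rangle$, respectively $\langle w+v_{1,n+1}\rangle$ and $\langle w+v_{2,n+1}\rangle$, meeting $V_n$ trivially, each of which contributes a copy of $\bar{P}_n$ through its link in the filtered poset --- they are not counts of vectors in the last factor (which has only one anisotropic and two nonzero isotropic vectors). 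Likewise $2^{2n-2}$ in the correction term counts the vectors $w'$ with $q(w')=1$, $b(w,w')=1$ completing $w$ to a $2$-dimensional isotropic $W$ with $W\cap V_n=0$, not ``decorations at the $(n-1)$-st step''; and the factor $\underline{3}_{n+1}$ does not arise from ``augmenting a decoration by the central part of the last factor'' (decorations are disjoint from the center) but from the cancellation of the $w=0$ copies of $\Sigma|\bar{P}_n|$ against $\Sigma|\bar{P}_n|\subset\Sigma|\bar{R}_{n+1}|$ when the two cofibrations are spliced, in parallel with the $n=1$ base case. So, beyond the base case, you would need to replace your strata (i)--(iii) by the paper's filtration according to $W\cap V_n$ and then actually establish the surjectivity statement; as written, neither the concentration of homology nor the non-virtual interpretation of the subtraction is proved.
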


\vspace{5mm}

\noindent {\bf Comment:}
The expression for $\mathscr{H}_n$ given by the Theorem is a direct sum of representations of the form
$$\bigotimes_{i\in S_1} \underline{2}_i \otimes \bigotimes_{j\in S_2} \underline{3}_j$$
for $S_1 \cap S_2 = \emptyset$. I do not know if homology groups of $\widetilde{V}_n$ with coefficients in these representations
are all completely known. For $|S_2|\leq 2$, they can be deduced from the computation of Quillen \cite{QuillenExtraSpecialGroups}.

\vspace{5mm}

\begin{proof}
We proceed by induction on $n$. In the case of $n=1$, the isotropic subspaces are $\langle v_1\rangle$
and $\langle v_2\rangle$, and there are two lifts to $\widetilde{V}_1$, and each pair of lifts is given by one $\Z /2$-summand
of $V_1 = \Z /2\oplus \Z /2$. Note that we have $\underline{3}$, because we must consider reduced homology.

Now to pass from $|\bar{P}_n|$ to $|\bar{P}_{n+1}|$, consider first the poset $\bar{Q}_n$ of decorated $q$-isotropic subspaces of
$V_{n+1}$ which intersect non-trivially with $V_n$.
Then the inclusion $|\bar{P}_n| \subset |\bar{Q}_n|$ is an equivalence by Lemma \ref{POSETLemma}, considering the map the other way given by
$$W\mapsto W\cap V_n.$$
Now a $q$-isotropic subspace $W$ of $V_{n+1}$ with $W\cap V_n =0$ has $dim(W) \leq 2$.
Let $\bar{R}_{n+1}$ denote the union of $Q_n$ and the set of decorated isotropic subspaces $W\subset V_{n+1}$ with $dim(W)=2$,
$W\cap V_n =0$.
Then the poset $(\bar{R}_{n+1})_{\geq W} \cap Q_n$ for any such space $W$ consists of
copies of the poset $\underline{4} \bar{P}_{n-1}$ (here we use $\underline{4}$ to denote $4$ additional independent decorations).
The factors \rref{RecursiveForEqHomOfPoset} correspond to the choices of $W$, consisting of one non-zero $q$-isotropic vector
$w\in V_n$, and one vector $w'$ with $q(w')=1$, $b(w, w')=1$.
There are $2^{2n-2}$ choices of $w'$ for each $w$.
Then $W = \langle v_1 + w, v_1+v_2+w'\rangle$. This leads to a based cofibration
\beg{CofibrESGroupsPoset}{|\bar{P}_n| \rightarrow |\bar{R}_{n+1}| \rightarrow \operatornamewithlimits{\bigvee}_{\underline{4} (2^{2n-1} + 2^{n-1} -1)2^{2n-2}} \Sigma |\bar{P}_{n-1}|.}
Now $\bar{P}_{n+1}$ is the union of $\bar{R}_{n+1}$ with the set of all decorated $q$-isotropic subspaces $W\subset V_{n+1}$ with
$W\cap V_n =0$, $dim(W) =1$.
For such a space $W$, $(\bar{P}_{n+1})_{\geq W} \cap R_{n+1}$ consists of
\beg{TwoChooseIsoTwice1Once}{\underline{2} (2 (2^{2n-1} + 2^{n-1}) + (2^{2n-1} -2^{n-1}))}
copies of $\bar{P}_n$.
The $\underline{2}$, again, corresponds to additional decorations. The $2(2^{2n-1} + 2^{n-1})$ summands correspond to $q$-isotropic vectors
of the form $w+v_1, w+v_2$, for $w\in V_n$, the $2^{2n-1} -2^{n-1}$ summand correspond to $q$-isotropic vectors of the form
$w+v_1+v_2$. Thus, we obtain a based cofibration sequence
\beg{CofibrESGroupsPosetNextStep}{|\bar{R}_{n+1}| \rightarrow |\bar{P}_{n+1}| \rightarrow \operatornamewithlimits{\bigvee}_{\underline{2} (2(2^{2n-1} + 2^{n-1}) + (2^{2n-1} -2^{n-1}))} \Sigma |\bar{P}_n|.} 

Now from \rref{CofibrESGroupsPoset} and \rref{CofibrESGroupsPosetNextStep}, we can easily eliminate $|\bar{R}_{n+1}|$, as we see that the copies of
$\Sigma |\bar{P}_n|$ in \rref{CofibrESGroupsPosetNextStep} corresponding to $w=0$ project identically to $\Sigma |\bar{P}_n| \subset \Sigma |\bar{R}_{n+1}|$ under the connecting map.
Thus, we obtain a cofibration sequence of the form
\beg{FinalCofibrESGroupsPoset}{|\bar{P}_{n+1}| \rightarrow \operatornamewithlimits{\bigvee}_{\underline{2}(2(2^{2n-1} + 2^{n-1} -1) + 2^{2n-1} -2^{n-1})} \hspace{-5mm}\Sigma |\bar{P}_n|\rightarrow \operatornamewithlimits{\bigvee}_{\underline{4} (2^{2n-1} + 2^{n-1} -1) 2^{2n-2}}\Sigma^2|\bar{P}_{n-1}|.}
The second map \rref{FinalCofibrESGroupsPoset} is shown to be onto in reduced homology using the sums of terms indicated in the statement of the Theorem.
(In particular, we consider, for a $q$-isotropic vector $w+v$, with $0\neq w \in V_{n+1}$, all choices of vectors $w'$ such that $\langle w+v_1, w'+ v_1 +v_2\rangle$ is $q$-isotropic. Note that this includes but is not equal to, for $n>1$, all $\langle w+v_1, u\rangle$ $q$-isotropic.)

The dichotomy between canceling the first or second summand in \rref{RecursiveForEqHomOfPoset} comes from distinguishing whether the projection of $w$ to $V_{n-1} $ is $0$ or not.

\end{proof}

\noindent {\bf Comment:}
The same method shows that the reduced homology of the poset of undecorated $q$-isotropic subspaces of $V_n$ is concentrated in degree $n-1$
and has rank $2^{n(n-1)}$.
This poset (for $n\geq 2$) is the Tits building of $\Omega^+_{2n} (2)$
(the adjoint Chevally group of type $D_n$ at the prime $2$), and this fact therefore also follows from the Solomon-Tits Theorem \cite{Solomon}.

\vspace{5mm}

The number of $q$-isotropic subspaces $U_k$ of dimension $k$ of $V_{n}$ is
$$\resizebox{0.9\hsize}{!}{$v_{n,k}=2^{\frac{k(k-1)}{2}}\cdot\frac{(2^{2n-1} +2^{n-1} -1) (2^{2n-3} +2^{n-2} -1) \dots (2^{2n-2k+1} +2^{n-k}-1)}
{(2^k-1)(2^{k-1}-1) \dots (2-1)}$}$$
The Weyl group of $U_k$ is $\widetilde{V}_{n-k}$. Thus, we have proved the following
\begin{theorem}
We have
$$\widetilde{H\underline{\Z /2}}_{\;0}^G (S^{\infty \gamma}) = \Z/2.$$
For $i>0$,
$$\widetilde{H\underline{\Z /2}}_{\;i}^G (S^{\infty \gamma}) = v_{n,k} \bigoplus_{k=0}^n H_{i-n+k-1}(\widetilde{V}_{n-k}, \mathscr{H}_{n-k}).$$
\end{theorem}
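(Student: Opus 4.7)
The plan is to apply the corollary of Proposition~\ref{spectralsequencebased} with $E = H\underline{\Z/2}$ and $X = E\mathscr{F}_\gamma$, so that $\widetilde{X} \simeq S^{\infty \gamma}$. Since $\widetilde{V}_n$ is a $2$-group and $X^{\widetilde{V}_n} = \emptyset$ (the central $\Z/2$ lies outside $\mathscr{F}_\gamma$), Theorem~\ref{spectralsequencecollapses} guarantees that the spectral sequence collapses at $E^1$, so the task reduces to unpacking the $E^1$-terms using the indexing data recorded immediately before the theorem.

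The key geometric step is the identification of the poset $P_H^{\mathscr{F}_\gamma}$. For $H\in\mathscr{F}_\gamma$ projecting to a $k$-dimensional $q$-isotropic $U\subseteq V_n$, any $K\supsetneq H$ in $\mathscr{F}_\gamma$ is elementary abelian and contains $H$ centrally, so $K\subseteq N(H)$. A direct computation using $[v,h]=b(\bar v,\bar h)\in Z$ shows $N(H) = p^{-1}(U^{\perp_b})$ (with $p\colon\widetilde{V}_n\to V_n$), and $N(H)/H$ inherits the split extraspecial structure on $U^{\perp_b}/U\cong V_{n-k}$; this is the asserted isomorphism $W(H)\cong\widetilde{V}_{n-k}$. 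The assignment $K\mapsto K/H$ then provides a $W(H)$-equivariant isomorphism of posets $P_H^{\mathscr{F}_\gamma}\cong\bar{P}_{n-k}$, since a decoration of $K$ extending that of $H$ is the same data as a decoration of $K/H$ inside $\widetilde{V}_{n-k}$. Consequently $|P_H^{\mathscr{F}_\gamma}|\simeq|\bar{P}_{n-k}|$ as $\widetilde{V}_{n-k}$-spaces, and the previous theorem places $\widetilde{H}_*(\widetilde{|\bar{P}_{n-k}|};\Z/2)$ in the single degree $n-k$, where it equals $\mathscr{H}_{n-k}$ as a representation.

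With only one nonzero row in the Borel homology spectral sequence, it collapses to give
\[
\pi_m\bigl(((H\underline{\Z/2})^H \wedge \widetilde{|P_H^{\mathscr{F}_\gamma}|})_{hW(H)}\bigr) \;=\; H_{m-(n-k)}(\widetilde{V}_{n-k};\,\mathscr{H}_{n-k}),
\]
using the standard fact that for the constant Mackey functor $\underline{\Z/2}$ the Borel construction on a $W(H)$-free input computes ordinary $W(H)$-homology with $\Z/2$-coefficients. A conjugacy class of height $p = n-k+1$ therefore contributes $E^1_{n-k+1,q} = v_{n,k}\cdot H_q(\widetilde{V}_{n-k};\,\mathscr{H}_{n-k})$ (the shift $(p+q-1)-(n-k) = q$ cancels exactly), while the base term $E^1_{0,q} = \pi_q(H\underline{\Z/2})$ accounts for the $\Z/2$ in total degree $i = 0$. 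Summing the diagonals $p+q=i$ over $k = 0,\ldots,n$ yields the claimed formula. I expect the main obstacle to be the equivariant poset identification in step two, which rests on the normalizer structure of decorated subgroups in the extraspecial $2$-group; everything else is careful bookkeeping together with results already in place.
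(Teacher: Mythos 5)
Your proposal is correct and follows essentially the same route as the paper: the paper's (very terse) proof likewise applies the collapsing spectral sequence of Section \ref{MainMethod} to $\widetilde{E\mathscr{F}_\gamma}$, identifies $P_H^{\mathscr{F}_\gamma}\cong \bar{P}_{n-k}$ with Weyl group $\widetilde{V}_{n-k}$, counts the $v_{n,k}$ conjugacy classes, and uses the concentration of $\widetilde{H}_*(|\bar{P}_{n-k}|)$ in a single degree to convert the Borel terms into $H_*(\widetilde{V}_{n-k};\mathscr{H}_{n-k})$. Your normalizer computation $N(H)=p^{-1}(U^{\perp_b})$ and the degree bookkeeping are exactly the details the paper leaves implicit, and they check out.
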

\qed

\end{document}